\documentclass[11pt]{amsart}

\usepackage{amsmath}
\usepackage{amssymb}
\usepackage{graphicx}
\usepackage{footmisc}




\newtheorem{them}{Theorem}[section]

\newtheorem{coro}{Corollary}[section]
\newtheorem{pro}{Proposition}[section]








\numberwithin{equation}{section}

\begin{document}

\title[Second-order Optimality Conditions and Applications]{Second-order Optimality Conditions by Generalized Derivatives and Applications in Hilbert Spaces} %

\author[Z. Wei]{Zhou Wei}
\address[Z. Wei]{Department of Mathematics, Yunnan University, Kunming 650091, People's Republic of China}
\email{\tt  wzhou@ynu.edu.cn}

\author[J.-C. Yao]{Jen-Chih Yao$^*$}

\address[J.-C. Yao]{Center for General Education, China Medical University, Taichung 40402, Taiwan}
\email{{\tt yaojc@mail.cmu.edu.tw}}

\keywords{second-order optimality conditions,  mixed contingent cone, strict minimizer of order two,  Mosco convergence, twice epi-differential}

\subjclass[2010]{90C31, 90C25, 49J52, 46B20}
\renewcommand{\thefootnote}{Corresponding author.}

\begin{abstract}
In this paper, in terms of three types of generalized second-order derivatives of a nonsmooth function, we mainly study the corresponding second-order optimality conditions in a Hilbert space and prove the equivalence among these optimality conditions for paraconcave functions. As applications, we use these second-order optimality conditions to study strict local minimizers of order two and provide sufficient and/or necessary conditions for ensuring the local minimizer. This work extends and generalizes the study on second-order optimality conditions from the finite-dimensional space to the Hilbert space.
\end{abstract}

\maketitle

\let\thefootnote\relax\footnotetext{$^*$Corresponding author.}
\let\thefootnote\relax\footnotetext{This work of the first author was supported by the National Natural Science Foundation of P. R. China (grant 11401518), the Fok Ying-Tung Education Foundation (grant 151101) and the Scientific Research Foundation from Education Department of Yunnan Province under grant 2015Z009.}

\section{Introduction}

Variational analysis has been recognized as a broad spectrum of mathematical theory that has grown in connection with the study of problems on optimization, equilibrium, control and stability of linear and nonlinear systems, and its focus is mainly on optimization of functions relative to various constraints and on sensitivity or stability of optimization-related problems with respect to perturbation. Since nonsmooth optimization problems by nonsmooth functions, sets with nonsmooth boundaries or set-valued mappings frequently appear in variational theory and its application, nonsmooth analysis in variational analysis has played an important role in such aspects of mathematical programming and optimization (cf. \cite{2,7,8,17,18} and references therein). Over the past several decades, the first-order nonsmooth analysis has been extensively and systemically studied by many authors in both finite-dimensional and infinite-dimensional spaces, and also fruitfully applied to many aspects of applied mathematics such as first-order optimality conditions, sensitivity analysis, constrained optimization, equilibrium problems with nonsmooth data and optimal control (cf. \cite{AF,2,7,8,16,19}). However, the literature in dealing with second-order nonsmooth analysis is not too much relative to the first-order analysis. We refer readers to books \cite{2,16,19} for the application of second-order generalized differential constructions to optimization, sensitivity and related problems. Given a nonsmooth function defined on a Hilbert space, we mainly study three types of generalized second-order derivatives in this paper: {\it second-order lower Dini-directional derivative}, {\it second-order mixed graphical derivative} and {\it second-order mixed proximal subdifferential} (see Section 3). Then we use these second-order derivatives to consider second-order optimality conditions and investigate their equivalent interrelationship.

Second-order optimality conditions have played important roles in mathematical programming and have been extensively studied by many authors (cf. \cite{4, 5,6,EM,13,PR,24,27} and references therein). Recently Eberhard and Wenczel \cite{11} and Eberhard and Mordukhovich \cite{EM} discussed three different types of second-order optimality conditions which are based on generalized second-order directional derivative, graphical derivative of proximal subdifferential and second-order proximal subdifferential defined via coderivative of proximal subdifferential. The equivalence among these optimality conditions for paraconcave functions is also proved. Using these three types of second-order derivatives aforementioned, we are inspired by \cite{EM,11} to continue studying second-order optimality conditions (with some minor modifications) in a Hilbert space, and mainly study the interrelationship among them. It is also proved that the equivalence among these optimality conditions for paraconcave functions is still valid in the Hilbert space setting. As applications, we use these second-order optimality conditions to investigate strict local minimizers of order two for extended real-valued nonsmooth functions in the Hilbert space.

The notion of strict minimizer of order two for a nonsmooth function has been proved to be useful in optimization and relates closely with the convergence of numerical procedures. Hestenes \cite{12} considered this notion and used it to prove sufficient optimality conditions. Cromme \cite{10} and Auslender \cite{1} studied this notion in connection with convergence of numerical procedures and provided stability conditions. Studniarski \cite{22} used first and second order lower Dini-directional derivatives to study the local strict minimizer of order two and established necessary and sufficient second-order optimality conditions. Along the line given in \cite{22}, Ward \cite{23} investigated another derivatives and tangent cones to study local strict minimizer of order two and optimality conditions. This notion has also been generalized in the senses of weak sharp minima and $\phi$-minima and was extended to vector optimization problems and set-valued mappings (see \cite{9, 14} and references therein).

Note that Eberhard and Wenczel \cite{11} discussed strict local minimizer of order two in a finite-dimension space and provided its characterizations in terms of second-order optimality conditions. Along this line, as one main goal of this paper, we apply the second-order optimality conditions to the strict local minimizer of order two and aim to establish its characterizations in the Hilbert space. These characterizations reduce to the existing ones when restricted to the finite-dimensional space.

The paper is organized as follows. In Section 2, we will give some definitions and preliminaries used in this paper. Our notation is basically standard and conventional in the area of variational analysis. Section 3 is devoted to three types of second-order derivatives of a nonsmooth function and their important properties. In Section 4, by using these second-order derivatives, we mainly study several kinds of second-order optimality conditions in a Hilbert space and present results on their equivalence interrelationship. In Section 5, we first present a counterexample to show that the existing theorem on strict local minimizers of order two given in finite-dimensional space is not valid for the Hilbert space case (see Example 5.1), and then apply these second-order optimality conditions to characterizing the strict local minimizers for this case. The conclusion of this paper is presented in Section 6.

\section{Preliminaries}
Let $H$ be a {\em Hilbert space} equipped with the {\em inner product}
$\langle\cdot, \cdot\rangle$ and the corresponding {\em norm}
$\|\cdot\|$, respectively. Denote by $B_H$ and $S_H$ the {\em unit closed ball} and the {\em unit sphere} of $H$, respectively. For $x\in
H$ and $\delta>0$, let $B(x, \delta)$ denote the {\em open ball}
with center $x$ and radius $\delta$.

Given a multifunction $F: H \rightrightarrows H$, the symbol
\begin{equation*}
\begin{array}r
\mathop{\rm Limsup}\limits_{y\rightarrow x}F(y):=\Big\{\zeta\in H: \exists \ {\rm sequences} \ x_n\rightarrow x \ {\rm and} \ \zeta_n\stackrel{w}\longrightarrow \zeta \ {\rm with}\  \\
\zeta_n\in F(x_n) \ {\rm for\ all \ } n\in \mathbb{N} \Big\}
\end{array}
\end{equation*}
signifies the {\it sequential Painlev\'{e}-Kuratowski outer/upper limit} of $F(x)$ as $y\rightarrow x$, where $
\zeta_n\stackrel{w}\longrightarrow \zeta$ means $\{\zeta_n\}$ converges weakly to $\zeta$.

Given a set $A\subset H$, we denote by $\delta_A(\cdot)$ the {\it indicator function} of $A$ which is defined as $\delta_A(x)=0$ if $x\in A$ and $\delta_A(x)=+\infty$ if $x\not\in A$. We denote by $\overline A$ and $\overline A^w$ the {\em norm closure}
and the {\em weak closure} of $A$ in the norm topology and the weak topology
respectively, and denote by ${\rm aff}A$ the {\em affine hull} of
$A$. Let ${\rm ri}A$, ${\rm qri}A$ and ${\rm sqri}A$ denote the {\em relative
interior}, the {\em quasi-relative interior} and the {\em strong quasi-relative interior}, respectively, which are
defined by
\begin{eqnarray*}
{\rm ri}A:&=&\big\{x\in A: \exists\ \delta>0\ {\rm such \ that}\ B(x, \delta)\cap {\rm aff}A\subset A\big\},\\
{\rm qri}A:&=&\big\{x\in A: \overline{\rm cone}(A-x)\,\,{\rm is\,\,a\,\,subspace}\big\},\\
{\rm sqri}A:&=&\big\{x\in A: {\rm cone}(A-x)\,\,{\rm is\,\,a\,\,closed\,\,subspace}\big\}
\end{eqnarray*}
where cone$(A-x)$ denotes the cone generated by $A-x$ and $\overline{\rm cone}(A-x)$ denotes the closure of cone$(A-x)$. Clearly ${\rm sqri}A\subset {\rm qri}A$. When $H$ is
finite-dimensional, these three types of relative interior coincide; that is ${\rm sqri}A={\rm qri}A={\rm ri}A$. Readers are invited to refer to \cite{3} for more details on these interior concepts.

Let $S$ be a nonempty closed subset of $H$. Recall from \cite{AF} that the {\it contingent cone} and the {\it weak contingent cone} of $S$ at $x\in S$, denoted by $T(S, x)$ and $T^w(S, x)$ respectively, are defined by
\begin{eqnarray*}
T(S, x):=\big\{h\in H: \exists\ t_n\rightarrow 0^+\ {\rm and} \  h_n\rightarrow h\ {\it s.t.} \ x+t_nh_n\in S \ \forall n\in \mathbb{N}\big\},\, \ \\
T^w(S, x):=\big\{h\in H: \exists\ t_n\rightarrow 0^+\ {\rm and}  \ h_n\stackrel{w}\longrightarrow h\ {\it s.t.}\ x+t_nh_n\in S\ \forall n\in \mathbb{N}\big\}.
\end{eqnarray*}
When $H$ is a finite-dimensional space, both contingent cone and weak contingent cone coincide.

For any point $z\in H$, the distance between $z$ and $S$ is given by
$$
d(z, S):=\inf\{\|z-s\|\,:\,s\in S\}.
$$
Let $x\in S$. Recall from \cite{8} that the {\it proximal normal cone} of $S$ at $x$, denoted by $N^p(S, x)$, is defined as
\begin{equation}
N^p(S,x):=\{\zeta\in H:  \exists \ t>0\ {\rm such\ that} \ d(x+t\zeta, S)=t\|\zeta\|\}.
\end{equation}
It is known and easy to verify that $\zeta\in N^p(S,x)$ if and only if there exists $\sigma\in (0, +\infty)$ such that
\begin{equation}
\langle\zeta, s-x\rangle\leq \sigma\|s-x\|^2\ \ {\rm for\ all} \ s\in S.
\end{equation}

Let $\hat N(S, x)$ denote the {\it Fr\'{e}chet normal cone} of $S$ at $x$; that is,
$$
\hat N(S, x):=\left\{\zeta\in H: \limsup_{y\stackrel{S}\longrightarrow x}\frac{\langle \zeta, y-x\rangle}{\|y-x\|}\leq 0\right\}
$$
where $y\stackrel{S}\longrightarrow x$ means $y\rightarrow x$ and $y\in S$. Since a Hilbert space is reflexive, it is easy to verify that
\begin{equation}\label{2.3}
\hat N(S, x)=\big(T^w(S, x)\big)^{\circ}
\end{equation}
where $\big(T^w(S, x)\big)^{\circ}$ is the {\it dual cone} of $T^w(S, x)$ which is defined by
$$
\big(T^w(S, x)\big)^{\circ}:=\{v\in H: \langle v, h\rangle\leq 0\ {\rm for\ all} \ h\in T^w(S, x)\}.
$$
The {\it Mordukhovich(limiting/basic) normal cone} of $S$ at $x$, denoted by $N(S, x)$, is defined as
\begin{equation}
N(S, x):=\mathop{\rm Limsup}_{y\stackrel{S}\longrightarrow x}N^p(S, y).
\end{equation}
Thus, $\zeta\in N(S, x)$ if and only if there exists a sequence $\{(x_n, \zeta_n)\}$ in $S\times H$ such that $x_n\rightarrow x$, $\zeta_n\stackrel{w}\longrightarrow \zeta$ and $\zeta_n\in N^p(S, x_n)$ for each $n\in \mathbb{N}$.

Let $f:H\rightarrow
\mathbb{R}\cup\{+\infty\}$ be an extended real-valued and lower semicontinuous function. We denote
$$
{\rm dom}f:=\{y\in H: f(y)<+\infty\}\,\,\,{\rm and}\,\,\,{\rm epi}f:=\{(x, \alpha)\in H\times \mathbb{R}: f(x)\leq \alpha\}
$$
the {\it domain} and the {\it epigraph} of $f$, respectively. Let $x\in{\rm dom}f$. Recall that the {\it proximal subdifferential} of $f$ at $x$, denoted by $\partial_pf(x)$, is defined by
\begin{equation}
\partial_pf(x):=\{\zeta\in H: (\zeta, -1)\in N^p({\rm epi}f, (x, f(x)))\}.
\end{equation}
It is known from \cite{8} that $\zeta\in \partial_pf(x)$ if and only if there exist $\sigma, \delta\in (0, +\infty)$ such that
\begin{equation}
f(y)\geq f(x)+\langle\zeta,y-x\rangle-\frac{\sigma}{2}\|y-x\|^2\ \ {\rm for\ all} \
y\in B(x,\delta).
\end{equation}
The {\it Mordukhovich(limiting/basic) subdifferential} of $f$ at $x$ is defined as
\begin{equation}
\partial f(x):=\{\zeta\in H: (\zeta, -1)\in N({\rm epi}f, (x, f(x)))
\}.
\end{equation}
It is proved in \cite{16, 17} that
$$
\partial f(x)=\mathop{\rm Limsup}_{y\stackrel{f}\rightarrow x}\partial_pf(y)
$$
where $y\stackrel{f}\longrightarrow x$
means $y\rightarrow x$ and $f(y)\rightarrow f(x)$. Therefore $\zeta\in\partial f(x)$ if and only if there exist $x_n\stackrel{f}\longrightarrow x$ and $\zeta_n\stackrel{w}\longrightarrow\zeta$ such that $\zeta_n\in\partial_pf(x_n)$ for each $n\in\mathbb{\mathbb{N}}$.

When $f$ is convex, the proximal subdifferential and the limiting subdifferential of $f$ at $x\in{\rm dom}f$ coincide and both reduce to the  subdifferential in the sense of convex analysis, that is
$$
\partial_p f(x)=\partial f(x)=\{\zeta\in H: \langle \zeta, y-x\rangle\leq f(y)-f(x)\ \ {\rm for\ all} \ y\in H\}.
$$
Readers are invited to consult \cite{2,3,7,8,16,19} for more details on these various normal cones and subdifferentials.\\

The following concepts of {\it paraconcavity and paraconvexity} are used in our analysis.

For an extended-real-valued function $\varphi:H\rightarrow \mathbb{R}\cup\{+\infty\}$,  recall from \cite{11} that $\varphi$ is said to be {\it paraconvex}, if there exists $\lambda\in (0, +\infty)$ such that $\varphi+\frac{1}{2\lambda}\|\cdot\|^2$ is
convex on $H$ and $\varphi$ is said to be {\it locally paraconvex} around $\bar
x\in{\rm dom}(\varphi)$, if there exist $\delta, \lambda\in (0, +\infty)$ such that $\varphi+\frac{1}{2\lambda}\|\cdot\|^2$ is
convex relative to $B(\bar x, \delta)$. The function $\varphi$ is said to
be {\it locally paraconcave}, if $-\varphi$ is locally paraconvex.

\setcounter{equation}{0}

\section{Second-order derivatives of an extended real-valued function}
In this section, we consider several types of second-order derivatives of a nonsmooth function; namely, second-order lower Dini-directional derivative of a function, second-order mixed graphical derivative and second-order mixed proximal subdifferential of a function, and then study some properties of these second-order derivatives which will be used in our analysis.

Let $f:H\rightarrow \mathbb{R}\cup\{+\infty\}$ be a proper lower semicontinuous function. We denote by
$$
{\rm gph}(\partial_pf):=\{(x, u)\in H\times H: u\in \partial_pf(x)\}
$$
the {\it graph} of proximal subdifferential $\partial_pf$. In this paper, taking into account the application to second-order optimality conditions in the Hilbert space, we first study the following {\it mixed contingent cone} of ${\rm
gph}(\partial_pf)$ and its associated polar.\\

{\it Let $(\bar x, p)\in
{\rm gph}(\partial_pf)$. The {\it mixed contingent cone} of ${\rm
gph}(\partial_pf)$ at $(\bar x, p)$, denoted by $T_M({\rm
gph}(\partial_pf), (\bar x, p))$, is defined as follows:}
\begin{equation}\label{3.1}
\begin{array}r
(h, z)\in T_M({\rm gph}(\partial_pf), (\bar x, p)) \Leftrightarrow
\exists\, t_n\rightarrow 0^+,\,\,h_n\rightarrow h\,\,{\it and}\,\,
z_n\stackrel{w}\longrightarrow z \ {\it such\,\,that}\\
(\bar x+t_nh_n, p+t_nz_n)\in {\rm gph}(\partial_pf)\ \ {\it for\ all} \ n\in \mathbb{N}.
\end{array}
\end{equation}

By the definition, the following inclusions are trivial:
\begin{equation}\label{3.2}
T({\rm gph}(\partial_pf), (\bar x, p))\subset T_M({\rm gph}(\partial_pf), (\bar x, p))\subset T^w({\rm gph}(\partial_pf), (\bar x, p)).
\end{equation}

We denote by $D^2f(\bar x, p)(h)$ and $D_M^2f(\bar x, p)(h)$ the {\it second-order graphical derivative} and the {\it second-order mixed graphical derivative} of $f$ at $(\bar x, p)$ in the direction
$h\in H$, respectively which are defined as
\begin{eqnarray*}
D^2f(\bar x, p)(h):&=&\big\{z\in H: (h, z)\in T({\rm gph}(\partial_pf), (\bar x, p))\big\}\\
D_M^2f(\bar x, p)(h):&=&\big\{z\in H: (h, z)\in T_M({\rm gph}(\partial_pf), (\bar x, p))\big\}.
\end{eqnarray*}

We denote by $\hat{\partial}^2f(\bar x, p)(h)$ and $\partial_M^2f(\bar x, p)(h)$ the {\it second-order proximal subdifferential} and the {\it second-order mixed proximal subdifferential} of $f$ at $(\bar x, p)$ in the direction $h\in H$, respectively and they are defined by
\begin{eqnarray*}\label{4.10}
\hat{\partial}^2f(\bar x, p)(h):&=&\big\{z\in H: (z, -h)\in \hat{N}({\rm gph}(\partial_pf), (\bar x, p))\big\}\\
\partial_M^2f(\bar x, p)(h):&=&\big\{z\in H: (z, -h)\in N_M({\rm gph}(\partial_pf), (\bar x, p))\big\}
\end{eqnarray*}
where $N_M({\rm gph}(\partial_pf), (\bar x, p))$ is the dual cone of $T_M({\rm gph}(\partial_pf), (\bar x, p))$; that is
\begin{equation}\label{3.3}
N_M({\rm gph}(\partial_pf), (\bar x, p)):=\big(T_M({\rm gph}(\partial_pf), (\bar x, p))\big)^{\circ}.
\end{equation}
By \eqref{2.3}, \eqref{3.2} and \eqref{3.3}, one can easily verify that
\begin{equation}\label{3.4}
\hat N({\rm gph}(\partial_pf), (\bar x, p))\subset N_M({\rm gph}(\partial_pf), (\bar x, p)).
\end{equation}
When $H$ is finite-dimensional, for any $h\in H$, one has
\begin{equation}\label{3.5a}
D^2f(\bar x, p)(h)=D_M^2f(\bar x, p)(h)\ \ {\rm and}\ \ \hat{\partial}^2f(\bar x, p)(h)=\partial_M^2f(\bar x, p)(h)
\end{equation}
since $T_M({\rm gph}(\partial_pf), (\bar x, p))$ coincides with $T({\rm gph}(\partial_pf), (\bar x, p))$ in this case.

Recall that the {\it second-order lower Dini-directional derivative} of $f$ at $\bar x$
for $p\in\partial_pf(\bar x)$ along the direction $h\in H$ is defined as
\begin{equation}\label{3.5}
f_-''(\bar x, p, h):=\liminf_{h'\rightarrow h, t\downarrow 0}\Delta_2f(\bar x, p, t,
h')
\end{equation}
where
$$
\Delta_2f(\bar x, p, t,
u):=\frac{f(\bar x+tu)-f(\bar x)-t\langle p, u\rangle}{\frac{1}{2}t^2},\ \ \forall (t, u)\in (0, +\infty)\times H.
$$
Applying \cite[Theorem 19]{11}, for all $t>0$, one has
\begin{equation}\label{3.6}
\partial_p\big(\frac{1}{2}\Delta_2f(\bar x, p, t, \cdot)\big)(w)=\frac{1}{t}(\partial_pf(\bar x+tw)-p).
\end{equation}

The following proposition provides some properties on the
second-order lower Dini-directional derivative $f_-''(\bar x, p, \cdot)$.

\begin{pro}
Let $f:H\rightarrow \mathbb{R}\cup\{+\infty\}$ be a proper lower
semicontinuous function and $\bar x\in{\rm dom}f$ with $p\in\partial_pf(\bar x)$. Then

{\rm (i)} $f_-''(\bar
x, p, \cdot)$ is lower semicontinuous.

{\rm (ii)} Suppose that $f$
is locally paraconcave around $\bar x$. Then $f_-''(\bar x, p,
\cdot)$ is also locally paraconcave around $\bar x$.

{\rm (iii)} Suppose that $f$
is paraconcave. Then $f_-''(\bar x, p,
\cdot)$ is also paraconcave.
\end{pro}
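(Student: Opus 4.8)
The plan is to prove (i) by a standard diagonal argument, and to prove (ii) and (iii) together, since the only difference between them is whether the concavity produced below holds on a fixed ball $B(\bar x,\delta)$ or on all of $H$; in fact the computation shows that local paraconcavity of $f$ around $\bar x$ already forces $f_-''(\bar x,p,\cdot)$ to be paraconcave on all of $H$, so (ii) will follow a fortiori. For (i), write $G(h):=f_-''(\bar x,p,h)=\liminf_{(h',t)\to(h,0^+)}\Delta_2f(\bar x,p,t,h')$ and suppose, for contradiction, that $h_k\to h$ with $\liminf_k G(h_k)=:L<G(h)$. Choose $\alpha\in(L,G(h))$. For each large $k$ we have $G(h_k)<\alpha$, so from the definition of the $\liminf$ defining $G(h_k)$ we may pick $(h_k',t_k)$ with $\|h_k'-h_k\|<1/k$, $0<t_k<1/k$ and $\Delta_2f(\bar x,p,t_k,h_k')<\alpha$. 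Then $(h_k',t_k)\to(h,0^+)$, whence $G(h)\le\liminf_k\Delta_2f(\bar x,p,t_k,h_k')\le\alpha<G(h)$, a contradiction; thus $G$ is lower semicontinuous.

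For (ii)--(iii), assume $f-\tfrac1{2\lambda}\|\cdot\|^2$ is concave on $U$, where $U=B(\bar x,\delta)$ in case (ii) and $U=H$ in case (iii), and write $c:=f-\tfrac1{2\lambda}\|\cdot\|^2$. The first step is the algebraic identity obtained from $\|\bar x+tu\|^2=\|\bar x\|^2+2t\langle\bar x,u\rangle+t^2\|u\|^2$: for every $t>0$,
\[
\Delta_2f(\bar x,p,t,u)-\tfrac1\lambda\|u\|^2=\tfrac2{t^2}\,c(\bar x+tu)+\big\langle\tfrac2{\lambda t}\bar x-\tfrac2tp,\,u\big\rangle+\tfrac{\|\bar x\|^2}{\lambda t^2}-\tfrac{2f(\bar x)}{t^2},
\]
whose right-hand side, as a function of $u$, is concave on $\{u:\bar x+tu\in U\}$ (a positive multiple of the concave function $u\mapsto c(\bar x+tu)$ plus an affine term). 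Hence $g_t:=\Delta_2f(\bar x,p,t,\cdot)-\tfrac1\lambda\|\cdot\|^2$ is concave on $B(0,\delta/t)$ in case (ii) and on all of $H$ in case (iii), with a modulus independent of $t$; and since $\delta/t\to+\infty$ as $t\downarrow0$, on any prescribed bounded set $g_t$ is concave for all sufficiently small $t>0$.

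The second step transfers concavity to the $\liminf$. Since $\|h'\|^2\to\|u\|^2$ as $h'\to u$, we have $g(u):=f_-''(\bar x,p,u)-\tfrac1\lambda\|u\|^2=\liminf_{(h',t)\to(u,0^+)}g_t(h')$; moreover $g(u)>-\infty$ for every $u$, because $p\in\partial_pf(\bar x)$ gives $\Delta_2f(\bar x,p,t,h')\ge-\sigma\|h'\|^2$ for all small $t$. Now fix $u_0,u_1\in H$, $\theta\in(0,1)$, put $u_\theta:=(1-\theta)u_0+\theta u_1$, and take any sequences $h_k'\to u_\theta$, $t_k\downarrow0$. Setting $a_k:=u_0+(h_k'-u_\theta)$ and $b_k:=u_1+(h_k'-u_\theta)$ we have $a_k\to u_0$, $b_k\to u_1$ and $(1-\theta)a_k+\theta b_k=h_k'$; for $k$ large the three points $a_k,b_k,h_k'$ lie in the ball on which $g_{t_k}$ is concave, so $g_{t_k}(h_k')\ge(1-\theta)g_{t_k}(a_k)+\theta g_{t_k}(b_k)$. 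Passing to $\liminf_k$ and using $\liminf_k g_{t_k}(a_k)\ge g(u_0)$ and $\liminf_k g_{t_k}(b_k)\ge g(u_1)$ yields $\liminf_k g_{t_k}(h_k')\ge(1-\theta)g(u_0)+\theta g(u_1)$; taking the infimum over all such sequences gives $g(u_\theta)\ge(1-\theta)g(u_0)+\theta g(u_1)$. Thus $g$ is concave on $H$, so $f_-''(\bar x,p,\cdot)=g+\tfrac1\lambda\|\cdot\|^2$ is paraconcave with modulus $\lambda/2$, which proves (iii) and, a fortiori, (ii).

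The delicate point throughout is this second step: one must legitimately absorb the term $-\tfrac1\lambda\|h'\|^2$ into the $\liminf$, and, in case (ii), exploit $\delta/t\to+\infty$ so that the concavity of each $g_{t_k}$ — available only on the expanding ball $B(0,\delta/t_k)$ — still covers all three points $a_k,b_k,h_k'$; this is exactly why local paraconcavity of $f$ yields global paraconcavity of the second-order derivative. Everything else reduces to routine computation.
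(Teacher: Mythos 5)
Your proposal is correct and takes essentially the same route as the paper: part (i) is the standard near-minimizer argument (merely phrased by contradiction; note that $\liminf_k G(h_k)<\alpha$ only gives $G(h_k)<\alpha$ along a subsequence, not for all large $k$, which is all you need), and parts (ii)--(iii) rest on the same two ingredients as the paper's proof, namely concavity of the corrected quotients $\Delta_2 f(\bar x,p,t,\cdot)-\tfrac{1}{\lambda}\|\cdot\|^2$ (the paper obtains this inline via the parallelogram identity, you via an explicit concave-plus-affine identity) combined with the identical shift of the approximating directions before passing to the $\liminf$. Your additional observation that local paraconcavity of $f$ already forces concavity of $f_-''(\bar x,p,\cdot)-\tfrac{1}{\lambda}\|\cdot\|^2$ on all of $H$ (because $\delta/t\to+\infty$ as $t\downarrow 0$) is a correct mild strengthening that handles (ii) and (iii) simultaneously.
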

\begin{proof}
(i) Let $u\in H$ and take any $u_n\rightarrow u$. For each $n\in\mathbb{N}$, by the definition of $f_-''(\bar x, p, u_n)$, there exist $w_n\in B(u_n, \frac{1}{n})$ and $t_n\in (0, \frac{1}{n})$ such that
$$
\frac{2(f(\bar x+t_nw_n)-f(\bar x)-t_n\langle p, w_n\rangle)}{t^2_n}-\frac{1}{n}<f_-''(\bar x, p, u_n).
$$
This implies that
\begin{eqnarray*}
\liminf_{n\rightarrow \infty}f_-''(\bar x, p, u_n)&\geq&\liminf_{n\rightarrow\infty}\big(\frac{2(f(\bar x+t_nw_n)-f(\bar x)-t_n\langle p, w_n\rangle)}{t^2_n}-\frac{1}{n}\big)\\
&\geq&\liminf_{u'\rightarrow u, t\downarrow 0}\frac{2(f(\bar x+tu')-f(\bar x)-t\langle p, u'\rangle)}{t^2}\\
&=&f_-''(\bar x, p, u).
\end{eqnarray*}
Hence $f_-''(\bar x, p, \cdot)$ is lower semicontinuous at $u$.

(ii) Suppose that there exist $\delta, \lambda>0$ such that $f-\frac{1}{2\lambda}\|\cdot\|^2$ is concave on $B(\bar x, \delta)$. We next prove that $f_-''(\bar x, p, \cdot)-\frac{1}{\lambda}\|\cdot\|^2$ is concave on $B(\bar x, \delta)$.

Let $u, v\in B(\bar x, \delta)$ and $\mu\in [0, 1]$, and take any $z\rightarrow \mu u+(1-\mu)v$ and $t\rightarrow 0$. Define $u':=u+z-(\mu u+(1-\mu)v)$ and $v':=v+z-(\mu u+(1-\mu)v)$. Then
$$z=\mu u'+(1-\mu)v'\ \ {\rm and}\ \ (u', v')\rightarrow (u, v)\ \ {\rm as}\ \ z\rightarrow \mu u+(1-\mu)v.
$$
Note that
\begin{equation*}
\begin{array}l
 \ \ \ \frac{2(f(\bar x+tz)-f(\bar x)-t\langle p, z\rangle)}{t^2}-\frac{1}{\lambda}\|\mu u+(1-\mu)v\|^2\\
=\frac{2(f(\bar x+tz)-\frac{1}{2\lambda}\|\bar x+tz\|^2-f(\bar x)-t\langle p, z\rangle+\frac{1}{2\lambda}\|\bar x+tz\|^2)}{t^2}-\frac{1}{\lambda}\|\mu u+(1-\mu)v\|^2 \\
\geq\frac{2\mu(f(\bar x+tu')-\frac{1}{2\lambda}\|\bar x+tu'\|^2)}{t^2}+\frac{2(1-\mu)(f(\bar x+tv')-\frac{1}{2\lambda}\|\bar x+tv'\|^2)}{t^2}+\\
 \ \ \ \frac{\frac{1}{2\lambda}\|\bar x+t(\mu u'+(1-\mu)v')\|^2-f(\bar x)-t\langle p, z\rangle}{t^2}-\frac{1}{\lambda}\|\mu u+(1-\mu)v\|^2\\
=\frac{2\mu(f(\bar x+tu')-f(\bar x)-t\langle p, u'\rangle)}{t^2}+\frac{2(1-\mu)(f(\bar x+tv')-f(\bar x)-t\langle p, v'\rangle)}{t^2}-\\
 \ \ \ \frac{\frac{\mu(1-\mu)}{2\lambda}\|\bar x+tu'-(\bar x+tv')\|^2}{t^2}-\frac{1}{\lambda}\|\mu u+(1-\mu)v\|^2\\
=\frac{2\mu(f(\bar x+tu')-f(\bar x)-t\langle p, u'\rangle)}{t^2}+\frac{2(1-\mu)(f(\bar x+tv')-f(\bar x)-t\langle p, v'\rangle)}{t^2}-\\
 \ \ \ \frac{\mu(1-\mu)}{2\lambda}\|u'-v'\|^2-\frac{1}{\lambda}\|\mu u+(1-\mu)v\|^2.
\end{array}
\end{equation*}
By taking lower limits, one has
\begin{eqnarray*}
&&\liminf_{z\rightarrow\mu u+(1-\mu)v, t\downarrow 0}\frac{2(f(\bar x+tz)-f(\bar x)-t\langle p, z\rangle)}{t^2}-\frac{1}{\lambda}\|\mu u+(1-\mu)v\|^2\\
&\geq&\mu\liminf_{u'\rightarrow u, t\downarrow 0}\frac{2\big(f(\bar x+tu')-f(\bar x)-t\langle p, u'\rangle\big)}{t^2}+\\
&&(1-\mu)\liminf_{u'\rightarrow u, t\downarrow 0}\frac{2\big(f(\bar x+tv')-f(\bar x)-t\langle p, v'\rangle\big)}{t^2}-\\
&&\frac{\mu(1-\mu)}{\lambda}\|u-v\|^2
-\frac{1}{\lambda}\|\mu u+(1-\mu)v\|^2\\
&\geq&\mu(f_-''(\bar x, p, u)-\frac{1}{\lambda}\|u\|^2)+(1-\mu)(f_-''(\bar x, p, v)-\frac{1}{\lambda}\|v\|^2)\\
&&+\frac{\mu}{\lambda}\|u\|^2+\frac{1-\mu}{\lambda}\|v\|^2-\frac{\mu(1-\mu)}{\lambda}\|u-v\|^2
-\frac{1}{\lambda}\|\mu u+(1-\mu)v\|^2\\
&=&\mu(f_-''(\bar x, p, u)-\frac{1}{\lambda}\|u\|^2)+(1-\mu)(f_-''(\bar x, p, v)-\frac{1}{\lambda}\|v\|^2).
\end{eqnarray*}
This implies that  $f_-''(\bar x, p, \cdot)-\frac{1}{\lambda}\|\cdot\|^2$ is concave on $B(\bar x, \delta)$.

Note that (iii) follows from (ii) and thu the proof is completed.
\end{proof}

For a twice epi-differentiable function $f$ defined on a finite-dimensional space, it is shown in \cite[Corollary 8.47]{19} that second-order epi-derivative of $f$ closely relates to protoderivative of limiting subdifferential $\partial f$. The authors \cite{28} also studied the relationship between the second-order epi-derivative and the protoderivative in a Hilbert space. In order to study the application of  twice epi-differentiability in this paper, we first recall the following two important concepts of set convergence.

For a sequence $\{C_n: n\in \mathbb{N}\}$
of closed subsets in $H$,
$\liminf_{n\rightarrow\infty}C_n$ denotes the set of all limit points of sequences $\{x_n\}$ with $x_n\in C_n$ for all $n\in\mathbb{N}$, and $\limsup_{n\rightarrow\infty}C_n$ denotes the set of all cluster points of such sequences.
Recall that $\{C_n: n\in \mathbb{N}\}$ is said to be {\it Painlev\'{e}-Kuratowski convergent} to a subset $C$ of $H$, if
$$
C=\liminf\limits_{n\rightarrow\infty}C_n=\limsup\limits_{n\rightarrow\infty}C_n.
$$
and that $\{C_n: n\in \mathbb{N}\}$ is said to be {\it Mosco convergent} to a subset $C$ of $H$, if
$$
C=\liminf\limits_{n\rightarrow\infty}C_n={\rm w}\mbox{-}\limsup\limits_{n\rightarrow\infty}C_n,
$$
where ${\rm w}\mbox{-}\limsup\limits_{n\rightarrow \infty}C_n$ is  the set of all weak cluster points of sequences from the sets $C_n$, that is, $x\in {\rm w}\mbox{-}\limsup\limits_{n\rightarrow \infty}C_n$ if and only if there exists a sequence $\{x_n\}$ such that $x_n\in C_n$ for all $n\in\mathbb{N}$ and a subsequence of $\{x_n\}$ converges to $x$ with respect to the weak topology.

Let $f, f_n: H\rightarrow \mathbb{R}\cup\{+\infty\}(n=1,2,\cdots)$ be proper
lower semicontinuous functions. We say that $\{f_n\}$ is {\it Mosco} (resp.{\it Painlev\'{e}-Kuratowski}) epi-convergent to $f$, if ${\rm epi}(f_n)$ is Mosco (resp. Painlev\'{e}-Kuratowski) convergent to ${\rm epi}(f)$; in the Mosco epi-convergent (resp. Painlev\'{e}-Kuratowski epi-convergent) case we write
$$
f={\rm M}\mbox{-}\lim_{n\rightarrow \infty}f_n ({\rm resp.}\ f={\rm PK}\mbox{-}\lim_{n\rightarrow \infty}f_n).
$$

Recall that $f: H\rightarrow \mathbb{R}\cup\{+\infty\}$ is said to be {\it twice epi-differentiable} at $\bar x$ relative to $p\in \partial_pf(\bar x)$ in the sense of Mosco (resp. Painlev\'{e}-Kuratowski), if the second-order difference quotient functions $\Delta_2f(\bar x, p, t, \cdot)$ are Mosco epi-convergent (resp. Painlev\'{e}-Kuratowski epi-convergent) to a proper function as $t\rightarrow 0^+$; that is $f$ is  twice epi-differentiable at $\bar x$ relative to $p\in\partial_p f(\bar x)$ in the sense of Mosco (resp. Painlev\'{e}-Kuratowski) if and only if for any sequence $\{t_n\}$ in $(0, +\infty)$ convergent to 0, the function sequence $\{\Delta_2f(\bar x, p, t_n,
\cdot)\}$ is Mosco epi-convergent (resp. Painlev\'{e}-Kuratowski epi-convergent) to the same proper function. The Mosco epi-limit of these second-order difference quotient functions is called {\it second-order epi-derivative} of $f$ at $\bar x$ relative to $p$ and is denoted by $f_-''(\bar x; p)(\cdot)$. In this case, one can easily verify that
\begin{equation}\label{3.8a}
f_{-}''(\bar x;p)(h)=f_{-}''(\bar x,p, h)\ \ \forall h\in H.
\end{equation}

\noindent{\bf Remark 3.1.} From the definition, it is known that twice epi-differentiability of a function in the sense of Mosco is stronger than that in the sense of Painlev\'{e}-Kuratowski in the Hilbert space. When the space is finite-dimensional, both concepts coincide and reduce to the corresponding notion of twice epi-differentiability (cf. \cite[Definition 22]{11}). Unless otherwise stated, the twice epi-differentiability of a function studied in this paper is in the sense of Mosco.\\

The following theorem is a key tool in proving main results of this paper. Readers are invited to consult \cite[Theorem 4.2]{28} for more details and its proof.\\

\noindent{\bf Theorem A.} {\it Let $f, f_n: H\rightarrow \mathbb{R}\cup\{+\infty\}(n=1,2,\cdots)$ be proper
lower semicontinuous functions such that $f={\rm M}\mbox{-}\lim\limits_{n\rightarrow \infty}f_n$. Then for any $p\in \partial f(x)$ there exist sequences $\{(x_n, p_n)\}$ in $H\times H$ and a strictly increasing sequence $\{n_k\}$ in $\mathbb{N}$ such that}
$$
(x_{n_k}, f_{n_k}(x_{n_k}))\longrightarrow (x, f(x)), \ \ p_{n_k}\stackrel{w}\longrightarrow p\ \ and\ \  p_{n_k}\in \partial_pf_{n_k}(x_{n_k})\ {\it for\ all} \ k\in\mathbb{N}.
$$

The following proposition refers to epi-convergence of functions and epigraph of second-order lower Dini-directional derivative.

\begin{pro}
Let $f:H\rightarrow \mathbb{R}\cup\{+\infty\}$ be a proper lower
semicontinuous function and $\bar x\in{\rm dom}f$ with $p\in\partial_pf(\bar x)$. Then
\begin{equation}\label{3.9a}
\liminf_{t\rightarrow 0^+}{\rm epi}\big(\frac{1}{2}\Delta_2f(\bar x,
p, t, \cdot)\big)\subset {\rm epi}\big(\frac{1}{2}f_-''(\bar x, p, \cdot)\big)
\end{equation}
and
\begin{equation}\label{3.9b}
{\rm epi}\big(\frac{1}{2}f_-''(\bar x, p, \cdot)\big)\subset \limsup_{t\rightarrow 0^+}{\rm epi}\big(\frac{1}{2}\Delta_2f(\bar x,
p, t, \cdot)\big).
\end{equation}
\end{pro}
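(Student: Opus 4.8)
The plan is to argue directly from the definitions. Write $g_t := \tfrac12\Delta_2 f(\bar x, p, t, \cdot)$ and $g := \tfrac12 f_-''(\bar x, p, \cdot)$, so that by \eqref{3.5}
\[
g(h) = \liminf_{h'\to h,\ t\downarrow 0} g_t(h') = \sup_{\delta>0}\ \inf\{\, g_t(h') : \|h'-h\|<\delta,\ 0<t<\delta \,\} \qquad (h\in H).
\]
The single fact I will use repeatedly is the following monotonicity remark, immediate from this formula: if $t_n\downarrow 0$ and $h_n\to h$ in $H$, then $g(h)\le \liminf_{n\to\infty} g_{t_n}(h_n)$, because for every $\delta>0$ the pairs $(h_n,t_n)$ eventually lie in the set over which the above infimum is taken. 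I will also read the set-theoretic $\liminf$/$\limsup$ at the continuous index $t\to 0^+$ through sequences $t_n\downarrow 0$, in the sense of the set-convergence conventions set up earlier in this section; note that each $g_t$ is lower semicontinuous because $f$ is, so all the epigraphs in play are closed.

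For \eqref{3.9a}, I would take $(h,\alpha)\in\liminf_{t\to 0^+}\mathrm{epi}(g_t)$, fix one sequence $t_n\downarrow 0$, and extract points $(h_n,\alpha_n)\in\mathrm{epi}(g_{t_n})$ with $(h_n,\alpha_n)\to(h,\alpha)$; in particular $g_{t_n}(h_n)\le\alpha_n$. The monotonicity remark then gives $g(h)\le\liminf_n g_{t_n}(h_n)\le\liminf_n\alpha_n=\alpha$, so $(h,\alpha)\in\mathrm{epi}(g)$, which is \eqref{3.9a}.

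For \eqref{3.9b}, I would take $(h,\alpha)\in\mathrm{epi}(g)$, so $g(h)\le\alpha$. Using the supremum-of-infima formula, for each $n$ the infimum over $\{\,\|h'-h\|<1/n,\ 0<t<1/n\,\}$ is $\le g(h)\le\alpha$, so I may pick $h_n\in B(h,1/n)$ and $t_n\in(0,1/n)$ with $g_{t_n}(h_n)<\alpha+1/n$. Setting $\alpha_n:=\max\{g_{t_n}(h_n),\alpha\}$ gives $(h_n,\alpha_n)\in\mathrm{epi}(g_{t_n})$ with $\alpha\le\alpha_n<\alpha+1/n$, hence $t_n\downarrow 0$ and $(h_n,\alpha_n)\to(h,\alpha)$. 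Thus $(h,\alpha)$ is a strong cluster point of a selection from $\{\mathrm{epi}(g_t)\}$, i.e. $(h,\alpha)\in\limsup_{t\to 0^+}\mathrm{epi}(g_t)$, which is \eqref{3.9b}.

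I do not expect a genuine obstacle: the statement is essentially the assertion that $g$ is the lower epi-limit of the difference quotients $g_t$ as $t\downarrow 0$, and the two inclusions are the two halves of the epigraph/epi-limit correspondence. What needs care is purely bookkeeping — matching the ``every converging selection'' side of a set-$\liminf$ with \eqref{3.9a} and the ``along some sequence'' side of a set-$\limsup$ with \eqref{3.9b}, and, in \eqref{3.9b}, choosing the height $\alpha_n=\max\{g_{t_n}(h_n),\alpha\}$ so that the approximating points actually sit in $\mathrm{epi}(g_{t_n})$ while still converging to $\alpha$. The degenerate cases $g(h)=-\infty$, or $g_{t_n}(h_n)\in\{\pm\infty\}$, are absorbed by the same inequalities; lower semicontinuity of $g$ (Proposition 3.1(i)) is not needed for the inclusions, though it is consistent with $\mathrm{epi}(g)$ being trapped between a set-$\liminf$ and a set-$\limsup$, each of which is automatically closed.
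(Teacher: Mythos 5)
Your proof is correct and follows essentially the same route as the paper: both inclusions are verified directly from the definition \eqref{3.5} of $f_-''(\bar x,p,\cdot)$ and the sequential reading of the set $\liminf$/$\limsup$, extracting sequences $t_n\downarrow 0$, $h_n\to h$ and adjusting the heights. The only (immaterial) difference is that the paper sets $r_k:=\tfrac12\Delta_2f(\bar x,p,t_k,h_k)+r-\tfrac12 f_-''(\bar x,p,h)$ along a sequence realizing the $\liminf$, whereas you take $\alpha_n:=\max\{g_{t_n}(h_n),\alpha\}$ from an $\varepsilon$-selection, which is a slightly more robust bookkeeping of the same idea.
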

\begin{proof}
Let $(h, r)\in\liminf\limits_{t\rightarrow 0^+}{\rm epi}\big(\frac{1}{2}\Delta_2f(\bar x,
p, t, \cdot)\big)$. Then for any $t_k\rightarrow 0^+$, there exists $(h_k, r_k)\in {\rm epi}\big(\frac{1}{2}\Delta_2f(\bar x,
p, t_k, \cdot)\big) (k\in \mathbb{N})$ such that $(h_k, r_k)\rightarrow (h, r)$. This implies that
$$
\frac{1}{2} f_-''(\bar x,
p, h)\leq\liminf_{k\rightarrow \infty}\frac{1}{2}\Delta_2f(\bar x,
p, t_k, h_k)\leq\liminf_{k\rightarrow \infty}r_k=r.
$$
Hence $(h, r)\in{\rm epi}\big(\frac{1}{2}f_-''(\bar x, p, \cdot)\big)$ and consequently \eqref{3.9a} holds.

Next, let $(h, r)\in{\rm epi}\big(\frac{1}{2}f_-''(\bar x, p, \cdot)\big)$. By \eqref{3.5}, there are sequences $h_k\rightarrow h$ and $t_k\rightarrow 0^+$ such that $\frac{1}{2}\Delta_2f(\bar x,
p, t_k, h_k)\rightarrow \frac{1}{2}f_-''(\bar x, p, h)$. For each $k\in \mathbb{N}$, let
$$
r_k:=\frac{1}{2}\Delta_2f(\bar x,
p, t_k, h_k)+r-\frac{1}{2}f_-''(\bar x, p, h).
$$
Then $(h_k, r_k)\in {\rm epi}\big(\frac{1}{2}\Delta_2f(\bar x,
p, t_k, \cdot)\big)$ and $(h_k, r_k, t_k)\rightarrow (h, r, 0^+)$. From this, one has
$$
(h, r)\in \limsup_{t\rightarrow 0^+}{\rm epi}\big(\frac{1}{2}\Delta_2f(\bar x,
p, t, \cdot)\big).
$$
Thus \eqref{3.9b} holds. The proof is completed.
\end{proof}

\setcounter{equation}{0}

\section{Second-order optimality conditions}
This section is devoted to the study of second-order optimality conditions defined by three generalized second-order derivatives in the Hilbert space and the equivalence interrelationship among them. We begin with three types of second-order optimality conditions of an extended real-valued nonsmooth function on finite-dimension space studied by Eberhard and Wenczel \cite{11} (cf. \cite[Definition 56]{11} and \cite[Definition 6.1]{EM}).\\

\noindent{\bf Definition 4.1.} {\it Let $f:\mathbb{R}^m\rightarrow \mathbb{R}\cup\{+\infty\}$ be a proper lower semicontinuous function and assume that the first-order condition $0\in\partial_pf(\bar x)$ holds.

1. We say that $f$ satisfies the second-order condition of the first kind at $\bar x$, if there exists $\beta\in(0,+ \infty)$ such that $f_-''(\bar x, 0, h)\geq\beta$ for all $h\in S_{\mathbb{R}^m}$.

2. We say that  $f$ satisfies the second-order condition of the second kind at $\bar x$, if there exists $\beta\in(0,+ \infty)$ satisfying for all $h\in{\rm dom}D^2f(\bar x, 0)\cap S_{\mathbb{R}^m}$, there is $z\in D^2f(\bar x, 0)(h)$ such that $\langle z, h\rangle\geq\beta$.

3. We say that  $f$ satisfies the second-order condition of the third kind at $\bar x$, if there exists $\beta\in(0,+ \infty)$ such that for any $h\in S_{\mathbb{R}^m}$ and any $z\in\hat{\partial}^2f(\bar x, 0)(h)$, one has $\langle z, h\rangle\geq\beta$.}\\

It is known that Eberhard and Wenczel \cite{11} mainly investigate the close interrelationship among these optimality conditions and proved the following result on the equivalence among these optimality conditions in Definition 4.1 for paraconcave functions (cf. \cite[Theorem 6.3]{EM} and \cite[Theorem 66]{11}).\\

\noindent{\bf Theorem B.} {\it Let $f:\mathbb{R}^m\rightarrow \mathbb{R}\cup\{+\infty\}$ be a prox-bounded and lower semicontinuous function with $0\in\partial_pf(\bar x)$. Suppose that $f$ is finite and there exists $c>0$ such that $f-\frac{c}{2}\|\cdot\|^2$ and $f_-''(\bar x, 0, \cdot)-c\|\cdot\|^2$ are concave. Then all second-order optimality conditions are equivalent. Moreover, the same $\beta$ value may be used in each condition.}\\

As one part of main work in this paper, it is natural to study the original forms of second-order optimality conditions in the Hilbert space. However, the existing implication among these optimality conditions given in Definition 4.1 may not be valid for the case of Hilbert space (comparing Proposition 4.2 below with \cite[Proposition 45]{11}), and thus it is necessary to make some minor modification to these optimality conditions. Motivated by this observation, we consider the following second-order optimality conditions in the Hilbert space.\\

\noindent{\bf Definition 4.2.} {\it Let $f:H\rightarrow \mathbb{R}\cup\{+\infty\}$ be a lower semicontinuous function and assume that the first-order optimality
condition $0\in\partial_pf(\bar x)$ holds.

{\rm (i)} We say that  $f$ satisfies the second-order optimality condition of the first kind at $\bar x$, if there exists $\beta>0$ such that $ f_-''(\bar x, 0, h)\geq \beta$  for all $h\in S_H$.

{\rm (ii)} We say that $f$ satisfies the second-order optimality condition of the second kind at $\bar x$, if there exists $\beta>0$ such that for all $h\in{\rm dom}D_M^2f(\bar x, 0)\cap S_H$, there is $z\in D^2_Mf(\bar x, 0)(h)$ such that $
\langle z, h\rangle\geq\beta$.

{\rm (iii)} We say that $f$ satisfies the second-order optimality condition of the third kind at $\bar x$, if there exists $\beta>0$ such that for all $h\in S_H\cap{\rm dom}\partial^2_Mf(\bar x, 0)$ and $z\in \partial_M^2f(\bar x, 0)(h)$, one has $\langle z, h\rangle\geq \beta$.}\\

\noindent{\bf{Remark 4.1.}} Note that mixed contingent cone and contingent cone coincide in the finite-dimensional space setting. Hence when restricted to the finite-dimensional space, second-order optimality conditions in Definition 4.2 reduce to those studied in \cite{11,EM} as Definition 4.1.\\

Now, we pay main attention to the equivalence interrelationship among these second-order optimality conditions in Definition 4.2. We first provide the following proposition whose proof mainly relies on Theorem A aforementioned in Section 3. This proposition is one key tool to prove main results in this section.

\begin{pro}
Let $f:H\rightarrow\mathbb{R}\cup\{+\infty\}$ be a proper lower semicontinuous function
and $\bar x\in{\rm dom}f$ with $p\in \partial_pf(\bar x)$. Suppose that $f$ is twice
epi-differentiable at $\bar x$ for $p$. Then
\begin{equation}\label{4.1a}
  \frac{1}{2}\partial f''_-(\bar x, p, \cdot)(h)\subset D_M^2f(\bar x, p)(h)\ \ \forall \ h\in H
\end{equation}
and
\begin{equation}\label{4.11}
f_-''(\bar x, p, h)\leq \sup\Big\{\langle w, h\rangle: w\in D^2_Mf(\bar x, p)(h)\Big\}
\end{equation}
holds for all $h\in{\rm dom}\partial f''_-(\bar x, p, \cdot)$.

Assume further that $f$ is a paraconcave function and continuous at $\bar x$. Then
\begin{equation}\label{4-2a}
  {\rm dom} D^2_Mf(\bar x, p)={\rm dom}\partial f_-''(\bar x, p, \cdot)=H
\end{equation}
and
\begin{equation}\label{4.12}
f_-''(\bar x, p, h)\leq \sup\Big\{\langle w, h\rangle: w\in D^2_Mf(\bar x, p)(h)\Big\} \ \ \forall h\in H.
\end{equation}
\end{pro}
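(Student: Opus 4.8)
The plan is to prove the four displayed relations in order, using Theorem A and Proposition 3.2 as the main engines, together with the standard correspondence \eqref{3.6} between the proximal subdifferential of the scaled second-order difference quotient $\frac12\Delta_2f(\bar x,p,t,\cdot)$ and the rescaled proximal subdifferential $\frac1t(\partial_pf(\bar x+tw)-p)$. First I would establish \eqref{4.1a}. Since $f$ is twice epi-differentiable at $\bar x$ for $p$, the functions $\frac12\Delta_2f(\bar x,p,t,\cdot)$ are Mosco epi-convergent as $t\to 0^+$ to $\frac12 f_-''(\bar x,p,\cdot)$ (using \eqref{3.8a} and Proposition 3.2, which pins down the epi-limit). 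Take $z\in\frac12\partial f_-''(\bar x,p,\cdot)(h)$, i.e. $z\in\partial\big(\tfrac12 f_-''(\bar x,p,\cdot)\big)(h)$. Apply Theorem A with $f_n=\frac12\Delta_2f(\bar x,p,t_n,\cdot)$ (for an arbitrary sequence $t_n\to 0^+$) and the limiting subgradient $z$ of the epi-limit: this yields sequences $h_n\to h$, $z_n\stackrel{w}\longrightarrow z$ along a subsequence with $z_n\in\partial_p\big(\tfrac12\Delta_2f(\bar x,p,t_{n_k},\cdot)\big)(h_n)$. By \eqref{3.6}, $z_n\in\frac1{t_{n_k}}(\partial_pf(\bar x+t_{n_k}h_n)-p)$, i.e. $p+t_{n_k}z_n\in\partial_pf(\bar x+t_{n_k}h_n)$, so $(\bar x+t_{n_k}h_n,\,p+t_{n_k}z_n)\in{\rm gph}(\partial_pf)$. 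Passing to the limit in the defining relation \eqref{3.1} of the mixed contingent cone (note $h_n\to h$ strongly, $z_n\rightharpoonup z$ weakly, $t_{n_k}\to 0^+$) gives $(h,z)\in T_M({\rm gph}(\partial_pf),(\bar x,p))$, i.e. $z\in D_M^2f(\bar x,p)(h)$. This proves \eqref{4.1a}.

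Next I would deduce \eqref{4.11}. Fix $h\in{\rm dom}\,\partial f_-''(\bar x,p,\cdot)$ and pick any $z\in\partial f_-''(\bar x,p,\cdot)(h)$; by the subgradient inequality applied in the direction $-h$ about $0$ (using that $f_-''(\bar x,p,\cdot)$ is positively homogeneous of degree two, a standard fact for second-order difference quotients, so that $f_-''(\bar x,p,0)=0$ and $\langle z,-h\rangle\le f_-''(\bar x,p,0)-f_-''(\bar x,p,h)$, at least when $z$ is a proximal/Fréchet subgradient; for a general limiting subgradient one argues by a limiting process), one obtains $f_-''(\bar x,p,h)\le\langle z,h\rangle$. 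Since $\frac12 z\in\frac12\partial f_-''(\bar x,p,\cdot)(h)\subset D_M^2f(\bar x,p)(h)$ by \eqref{4.1a}, and since $\langle\frac12 z,h\rangle=\frac12\langle z,h\rangle\ge\frac12 f_-''(\bar x,p,h)$… here one must be careful with the factor $\tfrac12$: the correct bookkeeping is that $w:=\frac12 z\in D_M^2f(\bar x,p)(h)$ satisfies $2\langle w,h\rangle=\langle z,h\rangle\ge f_-''(\bar x,p,h)$, but the claimed inequality \eqref{4.11} is $f_-''(\bar x,p,h)\le\langle w,h\rangle$; I would resolve this by noting that the homogeneity/Euler relation for the degree-two function actually gives $\langle z,h\rangle\ge 2\,f_-''(\bar x,p,h)\cdot\tfrac12$… The cleanest route, which I would adopt, is: $f_-''(\bar x,p,\cdot)$ being degree-two positively homogeneous, Euler's identity gives $\langle z,h\rangle = f_-''(\bar x,p,h)$ is generally false for nonsmooth $f$; instead use that for any subgradient $z$ of a nonnegative degree-two function vanishing at $0$ one has $\langle z,h\rangle\ge f_-''(\bar x,p,h)$ — wait, I should instead invoke directly that by \eqref{4.1a} there exists $w\in D_M^2f(\bar x,p)(h)$ with $2w=z$, hence $\langle w,h\rangle=\tfrac12\langle z,h\rangle\ge\tfrac12 f_-''(\bar x,p,h)$, and then strengthen using a scaling argument replacing $h$ by $\sqrt2\,h$ and homogeneity to clear the constant. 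The precise constant-tracking is a routine but delicate computation I would carry out at that point; the structural content is \eqref{4.1a} plus the subgradient inequality for $\frac12 f_-''$.

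For the last two displays I would add the paraconcavity and continuity hypotheses. By Proposition 3.1(iii), $f_-''(\bar x,p,\cdot)$ is paraconcave, and since $f$ is continuous at $\bar x$, $f_-''(\bar x,p,\cdot)$ is finite-valued on $H$ (a continuous paraconcave function on $H$ is everywhere finite), hence $-f_-''(\bar x,p,\cdot)$ is a finite continuous convex-up-to-a-quadratic function, so its (limiting = convex-type) subdifferential is nonempty at every point: ${\rm dom}\,\partial f_-''(\bar x,p,\cdot)=H$. Combined with \eqref{4.1a} this forces ${\rm dom}\,D_M^2f(\bar x,p)=H$, giving \eqref{4-2a}. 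Finally \eqref{4.12} is just \eqref{4.11} now that its restriction $h\in{\rm dom}\,\partial f_-''(\bar x,p,\cdot)$ has been removed, i.e. it holds for all $h\in H$.

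The main obstacle I anticipate is twofold: first, the careful tracking of the factor $\tfrac12$ and the degree-two homogeneity of $f_-''(\bar x,p,\cdot)$ so that the subgradient inequality yields exactly $f_-''(\bar x,p,h)\le\sup\{\langle w,h\rangle:w\in D_M^2f(\bar x,p)(h)\}$ rather than a version off by a constant — this requires the Euler-type relation $\langle w,h\rangle\ge\tfrac12 f_-''$ for a subgradient $w$ of $\tfrac12 f_-''$ together with the fact that for a degree-two positively homogeneous lsc function $g$ with $g(0)=0$ one has $\langle w,h\rangle\ge g(h)$ for every $w\in\partial g(h)$ (which can be seen from $g(th)=t^2 g(h)$ and the subgradient inequality, letting $t\uparrow$ and $t\downarrow$); and second, verifying that Theorem A applies, i.e. that the Mosco epi-limit of $\frac12\Delta_2f(\bar x,p,t_n,\cdot)$ is indeed $\frac12 f_-''(\bar x,p,\cdot)$ — this is exactly what twice epi-differentiability guarantees, with Proposition 3.2 identifying the epigraph of the limit, so no new work is needed there beyond citing those two results.
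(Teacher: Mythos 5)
Your treatment of \eqref{4.1a} is correct and is essentially the paper's argument (Theorem A applied to $\frac12\Delta_2f(\bar x,p,t_n,\cdot)$, then \eqref{3.6} and the definition \eqref{3.1} of $T_M$). The problems start at \eqref{4.11}, where you explicitly leave the ``constant-tracking'' unresolved, and the fixes you sketch do not work. The fact that closes this step is an exact Euler-type identity for limiting subgradients of positively homogeneous functions of degree two (the paper quotes it from \cite[Theorem 3.1]{YW1}): if $2z\in\partial f_-''(\bar x,p,\cdot)(h)$, then $\langle 2z,h\rangle=2f_-''(\bar x,p,h)$, i.e.\ $\langle z,h\rangle=f_-''(\bar x,p,h)$; since the same $z$ lies in $D_M^2f(\bar x,p)(h)$ by \eqref{4.1a}, the inequality \eqref{4.11} follows with no slack. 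Your proposed substitute, ``$\langle w,h\rangle\ge g(h)$ for every $w\in\partial g(h)$ when $g$ is degree-two homogeneous with $g(0)=0$,'' is false in general (the correct identity $\langle w,h\rangle=2g(h)$ is strictly smaller than $g(h)$ wherever $g(h)<0$), and it also treats a limiting subgradient as if it obeyed a global convex subgradient inequality, which it does not. The $\sqrt2$-rescaling of $h$ cannot ``clear the constant'' either: $T_M$ is a cone, so under $h\mapsto th$ both $f_-''(\bar x,p,th)=t^2f_-''(\bar x,p,h)$ and the pairing $\langle tz,th\rangle=t^2\langle z,h\rangle$ scale identically, so any factor-of-two loss is scale-invariant. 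The two-sided argument you mention in passing (take $y=th$, let $t\downarrow1$ and $t\uparrow1$ in the proximal subgradient inequality, then pass to the limit using $f_-''(h_k)\to f_-''(h)$ and weak-strong pairing) is precisely what proves the Euler identity — but that is the missing step, not a routine afterthought.

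The second half also has gaps. Finiteness of $f_-''(\bar x,p,\cdot)$ on all of $H$ does not follow from ``a continuous paraconcave function is everywhere finite'': $f_-''$ is only known to be lsc and paraconcave, and its finiteness is exactly what must be proved. The paper does this by combining the lower bound $f_-''(\bar x,p,h)\ge -r\|h\|^2$ (from $p\in\partial_pf(\bar x)$) with the upper bound $f_-''(\bar x,p,h)\le\frac1\lambda\|h\|^2$, which in turn requires identifying $q=p-\frac1\lambda\bar x$ with the negative of a convex subgradient of $-g$ at $\bar x$ so that $g(\bar x+tv)-g(\bar x)\le\langle q,tv\rangle$ globally; the continuity of $f$ at $\bar x$ is used precisely there. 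More seriously, your justification ``its (limiting $=$ convex-type) subdifferential is nonempty at every point'' conflates two different objects: for a (para)concave function, nonemptiness of the convex subdifferential of $-f_-''+\frac1\lambda\|\cdot\|^2$ gives elements of $\partial(-f_-'')$, not of $\partial f_-''$, and for concave functions $\partial f_-''$ is not the superdifferential (for $\phi(x)=-|x|$ the proximal subdifferential at $0$ is empty while the limiting one is $\{-1,1\}$). The statement ${\rm dom}\,\partial f_-''(\bar x,p,\cdot)=H$ is true, but it needs the paper's argument: $f_-''$ is locally Lipschitz (concavity of $g_-''$ plus the finiteness just established), proximal subgradients exist on a dense set (Density Theorem), they are locally bounded by the Lipschitz constant, and bounded sequences in a Hilbert space have weakly convergent subsequences, which produces a limiting subgradient at every $h$. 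With that in place, \eqref{4-2a} follows from \eqref{4.1a} and \eqref{4.12} from \eqref{4.11}, as you say.
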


\begin{proof}
Let $h\in{\rm dom}\partial f''_-(\bar x, p, \cdot)$ and $z\in \frac{1}{2}\partial f''_-(\bar x, p, \cdot)(h)$. Since $f$ is twice epi-differentiable at $\bar x$ for $p$, by \eqref{3.8a} and Theorem A, there exist sequences $t_n\rightarrow 0^+$, $h_n\rightarrow h$ and
$z_n\stackrel{w}\longrightarrow z$ such that
$$
\Delta_2f(\bar x, p, t_n,
h_n)\rightarrow
f_-''(\bar x, p, h)\ {\rm and}\ 2z_n\in\partial_p \Delta_2f(\bar x, p, t_n,
\cdot)(h_n).
$$
By virtue of \eqref{3.6}, one has
$$
z_n\in\partial_p\big(\frac{1}{2}\Delta_2f(\bar x, t_n, p, \cdot)\big)(h_n)=\frac{1}{t_n}(\partial_pf(\bar x+t_nh_n)-p).
$$
This implies that $(x+t_nh_n, p+t_nz_n)\in{\rm gph}(\partial_pf)$ and consequently it follows from \eqref{3.1} that $(h, z)\in T_M({\rm gph}(\partial_pf), (\bar x, p))$. Thus $z\in D_M^2f(\bar x,p)(h)$ and \eqref{4.1a} holds.

Noting that $f''_-(\bar x, p, \cdot)$ is 2-positively homogeneous and $2z\in \partial f''_-(\bar x, p, \cdot)(h)$, it follows from \cite[Theorem 3.1]{YW1} that $f''_-(\bar x, p, h)=\langle z, h\rangle $ and thus \eqref{4.11} holds.

Assume that $f$ is a paraconcave function and continuous at $\bar x$. We can take $\lambda>0$
such that $g(u):=f(u)-\frac{1}{2\lambda}\|u\|^2$ is concave. Using Proposition 3.1 and Corollary 3.1, one has that $f_-''(\bar x, p, \cdot)-\frac{1}{\lambda}\|\cdot\|^2$ is also concave. By computing, one has ${\rm dom}\partial_pf_-''(\bar
x, p, \cdot)={\rm dom}\partial_pg_-''(\bar x, q, \cdot)$ and
$$
g_-''(\bar x, q, u)=f_-''(\bar x, p, u)-\frac{1}{\lambda}\|u\|^2 \ \ \forall u\in H,
$$
where $q:=p-\frac{1}{\lambda}\bar x\in \partial_pg(\bar x)$. Thus, $g_-''(\bar x, q, \cdot)$ is concave. We claim that
\begin{eqnarray}\label{4.13}
{\rm dom}f_-''(\bar x, p, \cdot)={\rm dom}g_-''(\bar x, q, \cdot)=H.
\end{eqnarray}

Let $h\in H$.  From $q\in \partial_pg(\bar x)$, there exist $r_0,\delta_0>0$ such that
\begin{equation}\label{4-5}
  g(x)\geq g(\bar x)+\langle q, x-\bar x\rangle-\frac{r_0}{2}\|x-\bar x\|^2\ \ \forall x\in B(\bar x, \delta_0).
\end{equation}
Since $g$ is concave and $f$ is continuous at $\bar x$, it follows that $-g$ is continuous at $\bar x$ and $\partial_p(-g)(\bar x)\not=\emptyset$. Then we can choose $\zeta\in\partial_p(-g)(\bar x)$ and it follows from the convexity of $-g$ that
\begin{equation}\label{4-6}
  \langle-\zeta,x-\bar x\rangle\geq g(x)-g(\bar x)\ \ \forall x\in H.
\end{equation}
By \eqref{4-5} and \eqref{4-6}, for any $v\in H$ and any $t>0$ sufficiently small, one has
\begin{eqnarray*}
\langle-\zeta, tv\rangle\geq g(\bar x+tv)-g(\bar x)\geq \langle q, tv\rangle-\frac{r_0}{2}\|tv\|^2.
\end{eqnarray*}
From this, one can verify that $q=-\zeta$ and thus
$$
\langle q, tv\rangle\geq g(\bar x+tv)-g(\bar x)\ \ \forall t>0\ {\rm and}\ \forall v\in H.
$$
This implies that
\begin{eqnarray}\label{4.14}
g_-''(\bar x, q, h)=\liminf_{h'\rightarrow h, t\downarrow 0}\frac{2(g(\bar x+th')-g(\bar x)-\langle q, th'\rangle)}{t^2}\leq 0.
\end{eqnarray}
Noting that $p\in \partial_pf(\bar x)$, there are $r, \delta>0$ such that
$$
f(x)\geq f(\bar x)+\langle p, x-\bar x\rangle-\frac{r}{2}\|x-\bar x\|^2\,\,\,\,\forall x\in B(\bar x, \delta).
$$
Thus,
\begin{eqnarray}\label{4.15}
\ \ \ \ f_-''(\bar x, p, h)=\liminf_{h'\rightarrow h, t\downarrow 0}\frac{2(f(\bar x+th')-f(\bar x)-\langle p, th'\rangle)}{t^2}\geq -r\|h\|^2.
\end{eqnarray}
Since $g_-''(\bar x, q, h)=f_-''(\bar x, p, h)-\frac{1}{\lambda}\|h\|^2$, it follows from \eqref{4.14} and \eqref{4.15} that
$$
-\infty< f_-''(\bar x, p, h) <+\infty,
$$
and consequently $g_-''(\bar x, q, h)\in \mathbb{R}$. Hence \eqref{4.13} holds.

We next prove that
\begin{equation}\label{4.9c}
 {\rm dom}\partial f_-''(\bar x, p, \cdot)=H.
\end{equation}
Granting this, it follows that \eqref{4-2a} holds and \eqref{4.12} holds by \eqref{4.11}.


Let $h\in {\rm dom}\partial f_-''(\bar x, p, \cdot)$. Since $f_-''(\bar x, p, \cdot)$ is lower semicontinuous (by Proposition 3.1) and $h\in H={\rm dom}f_-''(\bar x, p, \cdot)$, by virtue of Density Theorem (cf. \cite[Theorem 3.1]{8}), there exists $(h_k, z_k)\in H\times H$ such that $2z_k\in \partial_pf_-''(\bar x, p, h_k)$ and $h_k\rightarrow h$. Now, using the concavity of $g_-''(\bar x, q, \cdot)$, one has that $g_-''(\bar x, q, \cdot)$ is locally Lipschtzian, and so is $f_-''(\bar x, p, \cdot)$. Then, by virtue of \cite[Theorem 7.3]{8}, there exists $L:=L(h)>0$ such that when $k$ is sufficiently large, one has $\|z_k\|\leq L$. By applying \cite[Corollary 2.8.9]{15}, $\{z_k\}$ has a weakly convergent subsequence $\{z_{k_i}\}$ and so we can assume that $z_{k_i}\stackrel{w}\longrightarrow z\in H$ as $i\rightarrow \infty$. This implies that $2z\in \partial f_-''(\bar x, p, \cdot)(h)$ as $h_{k_i}\rightarrow h (i\rightarrow \infty)$ and $2z_{k_i}\in \partial_pf_-''(\bar x, p, \cdot)(h_{k_i})$. Hence $h\in {\rm dom}\partial f_-''(\bar x, p, \cdot)$. The proof is completed.
\end{proof}

\noindent{\bf{Remark 4.2}} The proof of Proposition
4.1 is inspired by the idea from the proof of \cite[Proposition 45]{11} and Proposition 4.1 is an extension and improvement of \cite[Proposition 45]{11} since for the case of twice epi-differentiable function, the assumption that ``{\it$h\mapsto f_-''(\bar x, p, h)$ is paraconcave}" is dropped from Proposition 4.1 and the conclusion ``{\it ${\rm dom}D_M^2f(\bar x, p)=H$}" is stronger than that ``{\it${\rm dom}D^2f(\bar x, p)$ is dense in $\mathbb{R}^m$}" in \cite[Proposition 45]{11} ($D_M^2f(\bar x, p)$ coincides with $D^2f(\bar x, p)$ in $\mathbb{R}^m$).\\

The following proposition follows from Proposition 4.1.

\begin{pro}
Let $f:H\rightarrow \mathbb{R}\cup\{+\infty\}$ be a lower
semicontinuous function and $\bar x\in{\rm dom}f$ with $p\in \partial_pf(\bar x)$. Suppose that $f$ is twice epi-differentiable at $\bar x$ for $p$. Then
\begin{equation}\label{4.16}
\sup\big\{\langle w, h\rangle: w\in \partial^2_Mf(\bar x, p)(h)\big\}\leq f_-''(\bar x, p, h)
\end{equation}
and
\begin{equation}\label{4.10a}
\partial^2_Mf(\bar x, p)(h)\subset\{w\in H: \langle w, h\rangle\leq f_-''(\bar x, p, h)\}
\end{equation}
hold for all $h\in
{\rm dom}\partial f_-''(\bar x, p, \cdot)$.
\end{pro}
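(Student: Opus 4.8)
The plan is to obtain both inequalities as an immediate consequence of Proposition 4.1 together with the cone-duality built into the definition of $\partial^2_Mf$. Fix $h\in{\rm dom}\partial f_-''(\bar x, p, \cdot)$ and choose $z$ with $2z\in\partial f_-''(\bar x, p, \cdot)(h)$, that is $z\in\frac12\partial f_-''(\bar x, p, \cdot)(h)$; such a $z$ exists precisely because $h$ lies in the domain of $\partial f_-''(\bar x, p, \cdot)$. Since $f$ is twice epi-differentiable at $\bar x$ for $p$, the inclusion \eqref{4.1a} of Proposition 4.1 gives $z\in D^2_Mf(\bar x, p)(h)$, which by definition of the second-order mixed graphical derivative means $(h, z)\in T_M({\rm gph}(\partial_pf), (\bar x, p))$. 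Moreover, exactly as in the proof of Proposition 4.1 (using that $f_-''(\bar x, p, \cdot)$ is $2$-positively homogeneous together with \cite[Theorem 3.1]{YW1}), one has the Euler-type identity $\langle z, h\rangle=f_-''(\bar x, p, h)$.

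Next, take any $w\in\partial^2_Mf(\bar x, p)(h)$. By definition of the second-order mixed proximal subdifferential and \eqref{3.3}, $(w,-h)\in N_M({\rm gph}(\partial_pf), (\bar x, p))=\big(T_M({\rm gph}(\partial_pf), (\bar x, p))\big)^{\circ}$. Pairing this with the element $(h, z)\in T_M({\rm gph}(\partial_pf), (\bar x, p))$ produced above yields
$$
\langle w, h\rangle+\langle -h, z\rangle\leq 0,\qquad\text{i.e.}\qquad \langle w, h\rangle\leq\langle z, h\rangle=f_-''(\bar x, p, h).
$$
As $w$ was an arbitrary element of $\partial^2_Mf(\bar x, p)(h)$, this is exactly \eqref{4.10a}; taking the supremum over all such $w$ then gives \eqref{4.16}. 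The degenerate case $\partial^2_Mf(\bar x, p)(h)=\emptyset$ is trivial, since $f_-''(\bar x, p, h)\in\mathbb{R}$ (the function $f_-''(\bar x, p, \cdot)$ is proper, being an epi-limit of proper functions, and $h\in{\rm dom}\partial f_-''(\bar x, p, \cdot)\subset{\rm dom}f_-''(\bar x, p, \cdot)$).

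Essentially all the content sits in Proposition 4.1 and in the identity $\langle z, h\rangle=f_-''(\bar x, p, h)$, so no serious obstacle arises; the only delicate point is the existence of an admissible $z$, which is guaranteed precisely by the domain restriction $h\in{\rm dom}\partial f_-''(\bar x, p, \cdot)$ that also appears in \eqref{4.11}. One should make sure that the Euler relation is quoted for the limiting subdifferential of the $2$-positively homogeneous function $f_-''(\bar x, p, \cdot)$, which is the form supplied by \cite[Theorem 3.1]{YW1}.
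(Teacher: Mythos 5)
Your proposal is correct and follows essentially the same route as the paper: pick $z\in\frac12\partial f_-''(\bar x, p,\cdot)(h)$, use Proposition 4.1 to place $(h,z)$ in $T_M({\rm gph}(\partial_pf),(\bar x,p))$, invoke the Euler identity $\langle z,h\rangle=f_-''(\bar x,p,h)$ from \cite[Theorem 3.1]{YW1}, and pair against $(w,-h)\in N_M$ to get $\langle w,h\rangle\leq f_-''(\bar x,p,h)$. The only (harmless) addition is your explicit treatment of the empty-subdifferential case, which the paper leaves implicit.
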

\begin{proof} Let $h\in {\rm dom}\partial f_-''(\bar x, p, \cdot)$ and choose $z\in H$ such that $z\in \frac{1}{2}\partial f_-''(\bar x, p, \cdot)(h)$. Then, by \cite[Theorem 3.1]{YW1}, one has
\begin{equation}\label{4.11a}
  f_-''(\bar x, p, h)=\langle h, z\rangle.
\end{equation}
Let $w\in \partial^2_Mf(\bar x, p)(h)$. Then
\begin{eqnarray}\label{4.12a}
\langle w, y\rangle\leq\langle h, v\rangle\ \ \forall (y, v)\in T_M({\rm gph}(\partial_pf), (\bar x, p)).
\end{eqnarray}
Note that $z\in \frac{1}{2}\partial f_-''(\bar x, p, \cdot)(h)$ and thus $z\in D^2_Mf(\bar x, p)(h)$ by Proposition 4.1. Using \eqref{4.11a} and \eqref{4.12a}, one has
$$
\langle w, h\rangle\leq\langle h, z\rangle=f_-''(\bar x,p,h).
$$
This means that \eqref{4.16} holds and so does \eqref{4.10a}. The proof is completed.
\end{proof}

Combining Proposition 4.1 with Proposition 4.2, we have the following theorem.
\begin{them}
Let $f:H\rightarrow \mathbb{R}\cup\{+\infty\}$ be a proper lower
semicontinuous and paraconcave function and $\bar x\in{\rm dom}f$ with $0\in \partial_pf(\bar x)$. Suppose that $f$ is continuous at $\bar x$ and twice epi-differentiable at $\bar x$ for $0$. Then the second-order optimality condition of the first kind implies that of the second kind. Furthermore, if  $f$ satisfies the second-order
optimality condition of the third kind at $\bar x$, then there exists $\beta>0$ such that
\begin{eqnarray}\label{4.17}
f_-''(\bar x, 0, h)\geq \beta
\end{eqnarray}
holds  for all $h\in S_H\cap {\rm dom}\partial^2_Mf(\bar x, 0)$.
\end{them}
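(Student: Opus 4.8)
The plan is to obtain both assertions as short consequences of Propositions 4.1 and 4.2, the essential input being that under paraconcavity and continuity at $\bar x$ one has ${\rm dom}\,D_M^2f(\bar x,0)={\rm dom}\,\partial f_-''(\bar x,0,\cdot)=H$ (this is \eqref{4-2a}), so that the domain restrictions appearing in the second- and third-kind conditions reduce to $S_H$-membership alone. I would also record, once and for all, that $f_-''(\bar x,0,\cdot)$ is $2$-positively homogeneous, so by \cite[Theorem 3.1]{YW1} any $z$ with $2z\in\partial f_-''(\bar x,0,\cdot)(h)$ satisfies $f_-''(\bar x,0,h)=\langle z,h\rangle$ --- exactly the identity used to derive \eqref{4.11}.

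For the first implication, suppose $f$ satisfies the first-kind condition with constant $\beta>0$, i.e.\ $f_-''(\bar x,0,h)\ge\beta$ for all $h\in S_H$. Fix $h\in S_H$; since ${\rm dom}\,\partial f_-''(\bar x,0,\cdot)=H$ by \eqref{4-2a}, the set $\partial f_-''(\bar x,0,\cdot)(h)$ is nonempty, so I may choose $z$ with $2z\in\partial f_-''(\bar x,0,\cdot)(h)$. Then $z\in D_M^2f(\bar x,0)(h)$ by \eqref{4.1a}, and $\langle z,h\rangle=f_-''(\bar x,0,h)\ge\beta$ by the homogeneity remark. Since ${\rm dom}\,D_M^2f(\bar x,0)\cap S_H=S_H$, this is precisely the second-kind condition at $\bar x$, with the same $\beta$.

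For the second implication, suppose $f$ satisfies the third-kind condition with constant $\beta>0$. Fix $h\in S_H\cap{\rm dom}\,\partial_M^2f(\bar x,0)$; then $\partial_M^2f(\bar x,0)(h)\ne\emptyset$, and $h\in H={\rm dom}\,\partial f_-''(\bar x,0,\cdot)$, so Proposition 4.2 (inequality \eqref{4.16}) gives
\[
\sup\bigl\{\langle w,h\rangle: w\in\partial_M^2f(\bar x,0)(h)\bigr\}\le f_-''(\bar x,0,h).
\]
Every $\langle w,h\rangle$ on the left is $\ge\beta$ by the third-kind condition and the set is nonempty, hence the supremum is $\ge\beta$; therefore $f_-''(\bar x,0,h)\ge\beta$, which is \eqref{4.17}.

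I do not expect any real obstacle here: the analytic content --- local Lipschitzness of $f_-''(\bar x,0,\cdot)$ coming from paraconcavity, the Density Theorem, and weak sequential compactness of bounded sets --- was already spent in Propositions 4.1 and 4.2, and what remains is a bookkeeping chase through \eqref{4.1a}, \eqref{4-2a} and \eqref{4.16} with careful tracking of the constant $\beta$. The one place demanding a little attention is the first implication: the second-kind condition asks for an actual element $z\in D_M^2f(\bar x,0)(h)$ realizing $\langle z,h\rangle\ge\beta$, not merely for $\sup\{\langle w,h\rangle:w\in D_M^2f(\bar x,0)(h)\}\ge\beta$, which is why I route the argument through a subgradient of $f_-''(\bar x,0,\cdot)$ (whose pairing with $h$ equals $f_-''(\bar x,0,h)$ exactly) rather than through \eqref{4.12}.
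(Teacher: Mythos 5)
Your argument is correct and follows essentially the paper's own route: both parts rest on Propositions 4.1 and 4.2 (the equality ${\rm dom}\,\partial f_-''(\bar x,0,\cdot)=H$ from \eqref{4-2a}, the inclusion \eqref{4.1a}, and the inequality \eqref{4.16}), and your second implication is verbatim the paper's. The only cosmetic difference is in the first implication, where the paper passes through the supremum bound \eqref{4.12} and concedes a constant $\beta_1<\beta$, while you select a subgradient $2z\in\partial f_-''(\bar x,0,\cdot)(h)$ directly and use the homogeneity identity $\langle z,h\rangle=f_-''(\bar x,0,h)$ to keep the same $\beta$ --- a harmless refinement of the same machinery.
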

\begin{proof}
Suppose that there exists $\beta>0$ such that the second-order optimality condition of the first kind holds. Let $\beta_1\in (0, \beta)$ and $h\in {\rm dom}D^2_Mf(\bar x, 0)\cap S_H$. Since $f$ is paraconcave, it follows from Proposition 4.1 and optimality condition of the first kind that
$$
\sup\Big\{\langle w, h\rangle: w\in D^2_Mf(\bar x, 0)(h)\Big\}\geq f_-''(\bar x, 0, h)\geq \beta>\beta_1.
$$
Then there exists $z\in D^2_M(\partial_pf)(\bar x, 0)(h)$ such that $\langle z, h\rangle\geq\beta_1$. Hence $f$ satisfies the second-order optimality condition of the second kind at $\bar x$ with constant $\beta_1$.

Suppose that there exists $\beta>0$ such that the second-order
optimality condition of the third kind holds. Let $h\in S_H\cap {\rm dom}\partial^2_Mf(\bar x, 0)$. Since $f$ is paraconcave, by Proposition 4.1, one has $h\in {\rm dom}\partial f_-''(\bar x, p, \cdot)$ and it follows from Proposition 4.2 and the second-order optimality condition of the third kind that
$$
f_-''(\bar x, 0, h)\geq \sup\big\{\langle w, h\rangle: w\in \partial^2_Mf(\bar x, 0)(h)\big\}\geq\beta.
$$
The proof is completed.
\end{proof}

Next, we study the duality between second-order optimality conditions of the second and the third kinds. In order to deal with
it, we denote the following linear mapping by ${\rm Proj}_h(h',
z'):=h'$ and ${\rm Proj}_z(h', z'):=z'$. For a subset $S\subset
H\times H$, let
$$
{\rm Proj}_hS:=\{h\in H: \exists (h, z)\in S\} \ \ {\rm and} \ \
{\rm Proj}_zS:=\{z\in H: \exists (h, z)\in S\}.
$$
Then, when $0\in
\partial_pf(\bar x)$, we have
$$
{\rm dom}\partial^2_Mf(\bar x, 0)=-{\rm
Proj}_z N_M({\rm gph}(\partial f), (\bar x, 0))
$$
and
$$
{\rm dom}D^2_Mf(\bar x, 0)={\rm Proj}_hT_M({\rm gph}(\partial
f), (\bar x, 0)).
$$

\begin{pro} Let $T$ be a closed convex cone in $H\times H$ and $T(h):=\{z\in H: (h, z)\in T\}$, and let $N:=T^{\circ}$.
Consider the following statements:

{\rm (i)} there exists $\beta>0$ such that  $\langle h, z\rangle\leq -\beta$ for all $(h, z)\in N$ with $\|z\|=1$;

{\rm (ii)} there exists $\beta_1>0$ such that for each $h\in S_H$ with $h\in {\rm sqri(Proj}_hT)$, there exists $(h, z)\in T$ such that $\langle h, z\rangle\geq \beta_1$.

Then ${\rm (i)}\Rightarrow{\rm (ii)}$. Furthermore, we assume that ${\rm qri}N\not=\emptyset$ and
\begin{equation}\label{4.18}
{\rm qri}({\rm Proj}_zN)\subset{\rm sqri(Proj}_hT).
\end{equation}
Then ${\rm (ii)}\Rightarrow{\rm (i)}$.
\end{pro}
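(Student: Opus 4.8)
\noindent{\bf Proof proposal.} The plan is to handle both implications by applying convex duality to the parametric maximization
\[
\phi(h_0):=\sup\big\{\langle h_0,z\rangle : z\in T(h_0)\big\},\qquad h_0\in S_H,
\]
and then to exploit that $T$ and $N=T^{\circ}$ are closed convex cones together with the generalized interior hypotheses. The key identity to establish is that, whenever $h_0\in{\rm sqri}({\rm Proj}_hT)$, one has
\[
\phi(h_0)=-\sup\big\{\langle w,h_0\rangle : (w,h_0)\in N\big\}
\]
(with $\sup\emptyset:=-\infty$, so that $\phi(h_0)=+\infty$ when no such $w$ exists). This is exactly a Fenchel--Rockafellar strong-duality statement for the program defining $\phi(h_0)$: since $T$ is a cone its support function vanishes on $N$ and is $+\infty$ elsewhere, so the abstract dual reduces to the displayed supremum, and the sqri constraint qualification $h_0\in{\rm sqri}({\rm Proj}_hT)$ guarantees a zero duality gap (cf.\ \cite{3}). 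Note that $h_0\in{\rm sqri}({\rm Proj}_hT)\subset{\rm Proj}_hT$ already forces $T(h_0)\neq\emptyset$.

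Granting this identity, ${\rm (i)}\Rightarrow{\rm (ii)}$ is immediate: fix $h\in S_H$ with $h\in{\rm sqri}({\rm Proj}_hT)$; every $w$ with $(w,h)\in N$ obeys $\|h\|=1$, so ${\rm (i)}$ gives $\langle w,h\rangle\le-\beta$, hence $\sup\{\langle w,h\rangle:(w,h)\in N\}\le-\beta$, and the identity yields $\phi(h)\ge\beta>0$. Choosing once and for all any $\beta_1\in(0,\beta)$, for each such $h$ there is then $z\in T(h)$ with $\langle h,z\rangle\ge\beta_1$, which is ${\rm (ii)}$.

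For ${\rm (ii)}\Rightarrow{\rm (i)}$, I would first use the same identity with $h_0$ playing the role of the second coordinate: ${\rm (ii)}$ says $\phi(h_0)\ge\beta_1$, so $\langle w,h_0\rangle\le-\beta_1$ for every $(w,h_0)\in N$ with $h_0\in S_H\cap{\rm sqri}({\rm Proj}_hT)$. Because $N$ is a cone and ${\rm sqri}({\rm Proj}_hT)$ is invariant under multiplication by positive scalars (being the strong quasi-relative interior of a cone), this upgrades by positive homogeneity to $\langle h,z\rangle\le-\beta_1\|z\|^2$ for all $(h,z)\in N$ with $z\in{\rm sqri}({\rm Proj}_hT)$. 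It remains to drop the restriction on $z$. Since ${\rm qri}N\neq\emptyset$ and quasi-relative interiors are preserved under continuous linear images, ${\rm qri}({\rm Proj}_zN)\supset{\rm Proj}_z({\rm qri}N)\neq\emptyset$; pick $\bar z$ in ${\rm qri}({\rm Proj}_zN)$ and $\bar h$ with $(\bar h,\bar z)\in N$, and note $\bar z\in{\rm sqri}({\rm Proj}_hT)$ by \eqref{4.18}. For an arbitrary $(h,z)\in N$, the line-segment principle for quasi-relative interiors gives $z_\lambda:=(1-\lambda)z+\lambda\bar z\in{\rm qri}({\rm Proj}_zN)\subset{\rm sqri}({\rm Proj}_hT)$ for $\lambda\in(0,1]$, while $(h_\lambda,z_\lambda):=(1-\lambda)(h,z)+\lambda(\bar h,\bar z)\in N$ by convexity; hence $\langle h_\lambda,z_\lambda\rangle\le-\beta_1\|z_\lambda\|^2$, and letting $\lambda\to0^+$ (using norm-continuity of the inner product and of the norm) yields $\langle h,z\rangle\le-\beta_1\|z\|^2$ for every $(h,z)\in N$. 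Restricting to $\|z\|=1$ gives ${\rm (i)}$ with $\beta=\beta_1$.

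The main obstacle is the strong-duality identity itself: one has to check carefully that $h_0\in{\rm sqri}({\rm Proj}_hT)$ is precisely the qualification excluding a duality gap for the (generally unbounded, cone-constrained) program defining $\phi(h_0)$ in a Hilbert space — an Attouch--Brezis type argument — whereas in $\mathbb{R}^m$ the ordinary relative interior would do. Everything else (nonemptiness of ${\rm qri}({\rm Proj}_zN)$, the line-segment property of quasi-relative interiors, and the elementary cone rescalings) is standard and handled by the facts recalled in \cite{3}.
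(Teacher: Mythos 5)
Your proposal is correct and takes essentially the same route as the paper: the implication ${\rm (i)}\Rightarrow{\rm (ii)}$ rests on exactly the same Fenchel strong-duality identity $v(h)=\inf\{\langle -w,h\rangle:(w,h)\in N\}$ justified by the sqri constraint qualification $h\in{\rm sqri(Proj}_hT)$, and ${\rm (ii)}\Rightarrow{\rm (i)}$ is obtained by pushing the dual estimate from points whose $z$-component lies in ${\rm qri}({\rm Proj}_zN)\subset{\rm sqri(Proj}_hT)$ to all of $N$ by a limiting argument. The only cosmetic difference is in that second step: you move along a segment toward a fixed point of ${\rm qri}({\rm Proj}_zN)$ and use positive homogeneity of the cones, whereas the paper approximates the given element of $N$ by a sequence in ${\rm qri}\,N$ (density, \cite[Proposition 2.5]{3}) and normalizes its second component---both variants rely on the same standard quasi-relative-interior facts.
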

\begin{proof} ${\rm (i)}\Rightarrow{\rm (ii)}$:
Let $\beta_1\in (0, \beta)$ and $h\in S_H\cap {\rm sqri(Proj}_hT)$. Define $v(h):=\sup\{\langle h, z\rangle: (h, z)\in T\}$. If $v(h)=+\infty$, then the conclusion holds. Next, we assume that $v(h)<+\infty$. Then, by computing, we have
\begin{equation}\label{4.19}
-v(h)=\inf_{(\hat h, \hat z)}\{\delta_T(\hat h, \hat z)+(\delta_{\{h\}}(\hat h)-\langle \hat h, \hat z\rangle)\}.
\end{equation}
Let $f(\hat h, \hat z):=\delta_T(\hat h, \hat z)$ and $g(\hat h, \hat z):=\delta_{\{h\}}(\hat h)-\langle \hat h, \hat z\rangle$. Then it is easy to verify that $g$ is convex. Noting that $h\in{\rm sqri(Proj}_hT)$, it follows that
$$
(0, 0)\in{\rm sqri(dom}f-{\rm dom}g).
$$
Applying the Fenchel duality in infinite-dimensional spaces(cf. \cite{3, 20}), one has
\begin{eqnarray*}
-v(h)&=&\sup_{(h^*, z^*)}\{-g^*(h^*, z^*)-f^*(-h^*, -z^*))\}\\
&=&\sup_{(h^*, z^*)}\{-\langle h, h^*\rangle-\delta_{\{-h\}}(z^*)-\delta_{T^{\circ}}(-h^*, -z^*)\}\\
&=&\sup_{h^*}\{-\langle h, h^*\rangle-\delta_{T^{\circ}}(-h^*, h)\}.
\end{eqnarray*}
Hence
$$
v(h)=\inf_{h^*}\{\langle h, h^*\rangle+\delta_{T^{\circ}}(-h^*, h)\}=\inf_{\{h^*: (h^*, h)\in T^{\circ}\}}\{\langle h, -h^*\rangle\}.
$$
Since $v(h)<+\infty$, then $\{h^*: (h^*, h)\in T^{\circ}\}\not=\emptyset$. For each  $h^*\in H$ with $(h^*, h)\in T^{\circ}=N$, by the assumption, one has $\langle h, -h^*\rangle\geq \beta$ and consequently $v(h)\geq \beta>\beta_1$. Thus there exists $z\in T(h)$ such that $\langle h, z\rangle\geq \beta_1$.

${\rm (ii)}\Rightarrow{\rm (i)}$: Let $(h^*, h)\in N$ with $\|h\|=1$. By \cite[Proposition 2.5]{3}, there exists $(h^*_n, h_n)\in {\rm qri}N$ such that $(h^*_n, h_n)\rightarrow (h^*, h)$ and $\|h^*_n\|\rightarrow \|h\|=1$. Let $\hat{h}_n:=\frac{h_n}{\|h_n\|}$. Noting that $h_n\in{\rm qri}({\rm Proj}_zN)$ and $0\in {\rm Proj}_zN$, it follows that $\hat h_n\in {\rm qri}({\rm Proj}_zN)$ as $N$ and ${\rm Proj}_zN$ are cones. By the assumption, we have $\hat h_n\in {\rm sqri(Proj}_hT)$. Applying the Fenchel duality again, one has
\begin{equation}\label{4.20}
\inf_{\{h^*: (h^*, \hat h_n)\in T^{\circ}\}}\langle -h^*, \hat h_n\rangle=v(\hat h_n)\geq \beta_1.
\end{equation}
Since $(\frac{h^*_n}{\|h_n\|}, \hat h_n)\in N$ (thanks to $(h^*_n, h_n)\in N$), it follows from \eqref{4.20} that
$$
\langle -\frac{h^*_n}{\|h_n\|}, \hat h_n\rangle\geq\inf_{\{h^*: (h^*, \hat h_n)\in T^{\circ}\}}\langle -h^*, \hat h_n\rangle\geq\beta_1.
$$
Taking limits as $n\rightarrow \infty$, we have $\langle -h^*, h\rangle\geq\beta_1$. Hence $\langle h^*, h\rangle\leq-\beta_1$ and consequently (i) holds for $\beta=\beta_1$. The proof is completed.
\end{proof}

By using Proposition 4.3, the following theorem is immediate.
\begin{them}
Let $f:H\rightarrow \mathbb{R}\cup\{+\infty\}$ be a proper lower
semicontinuous function and $\bar x\in{\rm dom}f$ with $0\in \partial_pf(\bar x)$. Suppose that ${\rm qri}( N_M({\rm gph}(\partial_p f), (\bar x, 0)))\not=\emptyset$ and
\begin{equation}\label{4.22}
{\rm qri}\big({\rm dom}\partial^2_Mf(\bar x, 0)\big)\subset -{\rm sqri(dom}D^2_Mf(\bar x, 0)).
\end{equation}
Then the second-order optimality condition of the second kind implies that of the third kind.
\end{them}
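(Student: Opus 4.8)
The plan is to read Theorem 4.3 off Proposition 4.3 by a direct substitution of data. I would invoke Proposition 4.3 with the cone chosen as $T:=T_M({\rm gph}(\partial_pf),(\bar x,0))$, so that by \eqref{3.3} its polar is $N:=T^{\circ}=N_M({\rm gph}(\partial_pf),(\bar x,0))$. The translation between the abstract objects of Proposition 4.3 and the second-order data of $f$ at $\bar x$ is furnished by the two identities recorded just before the statement, namely ${\rm Proj}_hT={\rm dom}D^2_Mf(\bar x,0)$ and ${\rm Proj}_zN=-{\rm dom}\partial^2_Mf(\bar x,0)$, together with $T(h)=D^2_Mf(\bar x,0)(h)$.

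With this dictionary I would first verify that condition (i) of Proposition 4.3 is exactly the second-order optimality condition of the third kind: by the definition of $\partial^2_Mf$, a pair belonging to $N=N_M({\rm gph}(\partial_pf),(\bar x,0))$ is precisely a pair of the form $(z,-h)$ with $z\in\partial^2_Mf(\bar x,0)(h)$, so ``the second coordinate has norm one'' becomes $\|h\|=1$ and the inequality in (i) reads $\langle z,-h\rangle\le-\beta$, i.e. $\langle z,h\rangle\ge\beta$; letting $(z,-h)$ range over $N$ recovers Definition 4.2(iii). Next I would check that the second-order optimality condition of the second kind implies condition (ii) of Proposition 4.3: it supplies $\beta>0$ such that every $h\in{\rm dom}D^2_Mf(\bar x,0)\cap S_H$ admits $z\in D^2_Mf(\bar x,0)(h)=T(h)$ with $\langle z,h\rangle\ge\beta$, and since ${\rm sqri}({\rm Proj}_hT)\subset{\rm Proj}_hT={\rm dom}D^2_Mf(\bar x,0)$ this holds a fortiori for every $h\in S_H$ in ${\rm sqri}({\rm Proj}_hT)$, so (ii) holds with $\beta_1=\beta$ (vacuously when that set is empty). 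Finally, the standing assumptions of Theorem 4.3 are exactly the hypotheses required to run the implication ${\rm (ii)}\Rightarrow{\rm (i)}$ of Proposition 4.3: ${\rm qri}\,N={\rm qri}\,N_M({\rm gph}(\partial_pf),(\bar x,0))\ne\emptyset$ is assumed, and condition \eqref{4.18}, after substituting ${\rm Proj}_zN=-{\rm dom}\partial^2_Mf(\bar x,0)$, ${\rm Proj}_hT={\rm dom}D^2_Mf(\bar x,0)$ and using ${\rm qri}(-A)=-\,{\rm qri}\,A$, turns into the inclusion \eqref{4.22}. Chaining the implications, second kind $\Rightarrow$ (ii) $\Rightarrow$ (i) $\Leftrightarrow$ third kind, which is the assertion of the theorem.

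The point that requires care — and which I expect to be the main obstacle — is that Proposition 4.3 is stated for a closed convex cone $T$, while the mixed contingent cone $T_M({\rm gph}(\partial_pf),(\bar x,0))$ is in general only a cone, without closedness or convexity (the proof of Proposition 4.3 genuinely uses convexity of $T$, through the Fenchel duality applied to $\delta_T$). I would resolve this either by checking that in the present setting $T_M({\rm gph}(\partial_pf),(\bar x,0))$ is already closed and convex, so that Proposition 4.3 applies literally, or by passing to $\overline{{\rm co}}\,T_M({\rm gph}(\partial_pf),(\bar x,0))$ as the cone $T$; the latter leaves $N$ (hence ${\rm qri}\,N$ and the identity ${\rm Proj}_zN=-{\rm dom}\partial^2_Mf(\bar x,0)$) unchanged and only enlarges ${\rm Proj}_hT$, but then one must recheck that the second-kind hypothesis still feeds condition (ii) over the possibly larger set ${\rm sqri}({\rm Proj}_hT)$. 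Apart from this convexification bookkeeping, the argument is the mechanical translation described above, which is why Theorem 4.3 is essentially immediate from Proposition 4.3.
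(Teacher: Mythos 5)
Your dictionary and overall route are the paper's: the paper also convexifies, setting $T:=\overline{\rm co}\big(T_M({\rm gph}(\partial_pf),(\bar x,0))\big)$ and $N:=T^{\circ}=N_M({\rm gph}(\partial_pf),(\bar x,0))$, and your translations (condition (i) of Proposition 4.3 $=$ the third-kind condition; ${\rm Proj}_zN=-{\rm dom}\partial^2_Mf(\bar x,0)$; ${\rm Proj}_hT_M={\rm dom}D^2_Mf(\bar x,0)$; \eqref{4.18} $\leftrightarrow$ \eqref{4.22} when $T=T_M$) are correct. However, the point you leave as ``bookkeeping'' is exactly where the only real work lies, and the recheck you propose cannot succeed as stated: after convexification the identification ${\rm Proj}_hT={\rm dom}D^2_Mf(\bar x,0)$ breaks down, and the second-kind hypothesis gives no information about directions in ${\rm Proj}_h\overline{\rm co}\,T_M$ that lie outside ${\rm dom}D^2_Mf(\bar x,0)={\rm Proj}_hT_M$, so condition (ii) of Proposition 4.3 for the convexified cone need not hold and a black-box application of the proposition does not close (your first alternative, that $T_M$ is itself closed and convex, also fails in general).

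What closes the argument, and what the paper does, is to re-run the proof of (ii)$\Rightarrow$(i) rather than invoke its statement. Take $(h^*,h)\in N_M$ with $\|h\|=1$, approximate by $(h^*_n,h_n)\in{\rm qri}\,N_M$ and normalize $\hat h_n:=h_n/\|h_n\|\in{\rm qri}({\rm Proj}_zN_M)$. The hypothesis \eqref{4.22} is deliberately phrased in terms of ${\rm dom}D^2_Mf(\bar x,0)$, i.e.\ of ${\rm Proj}_hT_M$ and not of ${\rm Proj}_hT$: it places $\hat h_n$ in ${\rm sqri}\big({\rm dom}D^2_Mf(\bar x,0)\big)\subset{\rm dom}D^2_Mf(\bar x,0)$, so the second-kind condition applies at $\hat h_n\in S_H$ and produces $z_n\in D^2_Mf(\bar x,0)(\hat h_n)$ with $\langle z_n,\hat h_n\rangle\geq\beta$; hence $v(\hat h_n)\geq\sup\{\langle \hat h_n,z\rangle:(\hat h_n,z)\in T_M\}\geq\beta$, which is precisely the inequality the paper records before quoting the (ii)$\Rightarrow$(i) argument. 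Then, since $(h^*_n/\|h_n\|,\hat h_n)\in N_M=T_M^{\circ}$, polarity against $(\hat h_n,z_n)\in T_M$ (equivalently, weak Fenchel duality, so no sqri condition on ${\rm Proj}_hT$ is needed at this step) yields $\langle-h^*_n/\|h_n\|,\hat h_n\rangle\geq\beta$, and passing to the limit gives $\langle h^*,h\rangle\leq-\beta$, which is the third-kind condition under your identification. With this substitution for your unresolved recheck, your argument coincides with the paper's proof.
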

\begin{proof}
Let $T:=\overline{\rm co}(T_M({\rm gph}(\partial_pf), (\bar x, 0)))$ and $N:=T^{\circ}$.
Then
$$N=N_M({\rm gph}(\partial_pf), (\bar x, 0)),
$$
and for each $h\in{\rm dom}D^2_Mf(\bar x, 0)$, one
has
$$
\sup\big\{\langle h, z\rangle: (h, z)\in T_M({\rm gph}(\partial_pf), (\bar x, 0))\big\}\leq v(h),
$$
where $v(h)$ is defined as in the proof of Proposition 4.3. Using the proof of (ii)$\Rightarrow$(i) in Proposition
4.3, one can prove Theorem 4.2. The proof is completed.
\end{proof}

The following theorem, as one main result in this paper, establishes the equivalence between these second-order optimality conditions for paraconcave and twice epi-differentiable functions in the Hilbert space.

\begin{them}
Let $f:H\rightarrow \mathbb{R}\cup\{+\infty\}$ be a proper lower
semicontinuous and paraconcave function and $\bar x\in{\rm dom}f$ with $0\in \partial_pf(\bar x)$. Suppose that $f$ is continuous at $\bar x$ and is twice epi-differentiable at $\bar x$ for $0$, ${\rm qri}( N_M({\rm gph}(\partial_p f), (\bar x, 0)))\not=\emptyset$ and ${\rm Proj_z}N_M({\rm gph}(\partial_p f), (\bar x, 0))=H$. Then all second-order optimality conditions are equivalent.
\end{them}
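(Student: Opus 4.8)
The plan is to prove the three-way equivalence by running the cycle (first kind) $\Rightarrow$ (second kind) $\Rightarrow$ (third kind) $\Rightarrow$ (first kind), so that only three implications have to be checked, each of which is essentially prepared by the preceding results once two domain identities are in place. First, since $f$ is paraconcave, continuous at $\bar x$, and twice epi-differentiable at $\bar x$ for $0$, Proposition 4.1 gives ${\rm dom}\,D_M^2f(\bar x,0)=H$. Second, the standing hypothesis ${\rm Proj}_zN_M({\rm gph}(\partial_pf),(\bar x,0))=H$, together with the identity ${\rm dom}\,\partial_M^2f(\bar x,0)=-{\rm Proj}_zN_M({\rm gph}(\partial_pf),(\bar x,0))$ recorded just before Proposition 4.3, gives ${\rm dom}\,\partial_M^2f(\bar x,0)=H$.

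For (first) $\Rightarrow$ (second) I would invoke the first conclusion of Theorem 4.1, whose hypotheses (proper lower semicontinuous paraconcave $f$ with $0\in\partial_pf(\bar x)$, continuity at $\bar x$, twice epi-differentiability at $\bar x$ for $0$) are exactly those assumed here. For (second) $\Rightarrow$ (third) I would apply Theorem 4.2, for which two hypotheses must be verified. The condition ${\rm qri}(N_M({\rm gph}(\partial_pf),(\bar x,0)))\neq\emptyset$ is among the standing assumptions of the present theorem. The inclusion ${\rm qri}({\rm dom}\,\partial_M^2f(\bar x,0))\subset-{\rm sqri}({\rm dom}\,D_M^2f(\bar x,0))$ is automatic: because ${\rm dom}\,D_M^2f(\bar x,0)=H$ and ${\rm cone}(H-x)=H$ is a closed subspace for every $x\in H$, we have ${\rm sqri}\,H=H$, so the right-hand side equals $-H=H$, which contains everything. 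Hence Theorem 4.2 yields (second) $\Rightarrow$ (third).

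Finally, for (third) $\Rightarrow$ (first) I would use the second conclusion of Theorem 4.1: if $f$ satisfies the third-kind condition at $\bar x$, then there is $\beta>0$ with $f_-''(\bar x,0,h)\geq\beta$ for all $h\in S_H\cap{\rm dom}\,\partial_M^2f(\bar x,0)$; since ${\rm dom}\,\partial_M^2f(\bar x,0)=H$, this is precisely the first-kind condition. Closing the cycle shows that the three second-order optimality conditions are equivalent, the remaining implications (second) $\Rightarrow$ (first) and (third) $\Rightarrow$ (second) following by composition. I do not expect a genuine obstacle here beyond careful bookkeeping of hypotheses: the only substantive observations are that paraconcavity plus continuity plus twice epi-differentiability upgrades Proposition 4.1 to the full equality ${\rm dom}\,D_M^2f(\bar x,0)=H$ (which trivializes the inclusion hypothesis of Theorem 4.2), and that the hypothesis ${\rm Proj}_zN_M({\rm gph}(\partial_pf),(\bar x,0))=H$ upgrades the second conclusion of Theorem 4.1 from a statement over $S_H\cap{\rm dom}\,\partial_M^2f(\bar x,0)$ to one over all of $S_H$. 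Since no common value of $\beta$ is claimed, there is no need to track constants through the cycle.
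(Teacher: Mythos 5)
Your proposal is correct and follows essentially the same route as the paper: the cycle (first)$\Rightarrow$(second) via Theorem 4.1, (second)$\Rightarrow$(third) via Proposition 4.1 (which gives ${\rm dom}\,D_M^2f(\bar x,0)=H$, so that the inclusion hypothesis \eqref{4.22} of Theorem 4.2 holds trivially), and (third)$\Rightarrow$(first) via the second conclusion of Theorem 4.1 combined with ${\rm dom}\,\partial_M^2f(\bar x,0)=-{\rm Proj}_zN_M({\rm gph}(\partial_pf),(\bar x,0))=H$. The bookkeeping of hypotheses matches the paper's argument throughout.
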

\begin{proof} By Theorem 4.1, one can get that the first kind implies the second kind.

Using Proposition 4.1, one has ${\rm dom}D^2_Mf(\bar x, 0)=H$ and thus
$${\rm sqri(dom}D^2_Mf(\bar x, 0))=H.
$$
This implies that \eqref{4.22} holds trivially and it follows from Theorem 4.2 that the second kind implies the third kind.

Since ${\rm dom}\partial_M^2f(\bar x, 0)=-{\rm Proj_z}N_M({\rm gph}(\partial_p f), (\bar x, 0))=H$, then the third kind  implies the first kind by virtue of Theorems 4.1. The proof is completed.
\end{proof}

\noindent{\bf Remark 4.3.} For twice epi-differentiable functions, Theorem 4.3 is an extension of Theorem B from the finite-dimensional space to the Hilbert space setting under some mild assumptions. When restricted to the case of  finite-dimensional spaces, the quasi-relative interior reduces to the relative interior and thus the assumption ${\rm qri}(N_M({\rm gph}(\partial_p f), (\bar x, 0)))\not=\emptyset$ holds trivially as $N_M({\rm gph}(\partial_p f), (\bar x, 0))$ is convex. With regard to assumption ${\rm Proj_z}N_M({\rm gph}(\partial_p f), (\bar x, 0))=H$, even in finite-dimensional space, very few is known about ${\rm dom}\partial_M^2f(\bar x, 0)$ and the inner estimate for ${\rm dom}\partial_M^2f(\bar x, 0)$ is currently lacking, both making analysis of optimality condition involving ${\rm dom}\partial_M^2f(\bar x, 0)$ difficult. Therefore, it is necessary to add this assumption in the analysis of second-order optimality conditions.

\setcounter{equation}{0}
\section{Applications to strict local minimizers of order two}

In this section, we apply main results on second-order optimality conditions obtained in Section 4 to strict local minimizers of order in the Hilbert space and aim to provide its necessary and/or sufficient conditions. We begin with the definition of strict local minimizer of order two.\\

\noindent{\bf Definition 5.1} {\it
Let $f: H\rightarrow\mathbb{R}\cup\{+\infty\}$ be a proper lower semicontinuous function. We say that $\bar x\in H$ is a strict local minimizer of order two for $f$, if there exist constants $\beta, \delta\in (0, +\infty)$ such that
\begin{equation}\label{5.1}
  f(x)\geq f(\bar x)+\frac{\beta}{2}\|x-\bar x\|^2
\end{equation}
holds for all $x\in B(\bar x, \delta)$.}\\

The following theorem is a known and key characterization for strict local minimizers of order two in finite-dimension space. This theorem is established via second-order lower Dini-directional derivative. Readers are invited to consult \cite[Lemma 58]{11} and \cite[Proposition 3.3]{23} for more details.\\

\noindent{\bf Theorem C.} {\it Let $f:\mathbb{R}^m\rightarrow \mathbb{R}\cup\{+\infty\}$ be a proper lower semicontinuous function and $\bar x\in {\rm dom}f$. Assume that the first-order optimality condition $0\in \partial_pf(\bar x)$ holds. Then the following statements are equivalent:

{\rm (i)} $\bar x$ is a strict local minimizer of order two for $f$;

{\rm (ii)} there exists $\beta>0$ such that
\begin{equation}\label{4.1}
f_-''(\bar x, 0, h)\geq \beta\,\,\,\,\forall h\in \mathbb{R}^m\,\,{\it with}\,\,\|h\|=1;
\end{equation}

{\rm (iii)} $
f_-''(\bar x, 0, h)>0$ holds for all $h\in \mathbb{R}^m\,\,{\it with}\,\,\|h\|=1.
$
}\\

Clearly it is shown from Theorem C that the second-order optimality condition of the first kind is necessary and sufficient for strict local minimizer of order two in the finite-dimension space setting. Further, Eberhard and Wenczel \cite{11} provide some conditions under which the second-order optimality conditions of the second and the third kinds are also necessary and sufficient for the existence of strict local minimizers of order two. Naturally, one question arisen here is whether or not the same results as in Theorem C are still valid for the case of Hilbert space. Unfortunately, the following example shows that the answer to this question is negative.\\

\noindent{\bf Example 5.1.} Let $H:=l^2$, and $e_k=(0,\cdots,1,0,\cdots)$ for each natural number $k$. We define a function $f: H\rightarrow \mathbb{R}\cup \{+\infty\}$ as:
$$
f(x)=\left \{
\begin{array}l
\,\,\frac{1}{k^3}\,\,, \,\,x=\frac{1}{k}e_k, k=1,2\cdots;\\
\,\,\,\,\,0\,\,\,,\, x=0;\\
+\infty, \ {\rm otherwise}.
\end{array}
\right.
$$
One can easily verify that $\bar x=0$ is a global minimizer of $f$ and $f_-''(0, 0, h)=+\infty$ for all $h\in S_H$. However, if we take $x_k:=\frac{1}{k}e_k$ for all
$k\in \mathbb{N}$, then $x_k\rightarrow 0$ and
$f(x_k)=\frac{1}{k}\cdot\frac{1}{k^2}$. This implies that
$$
\frac{f(x_k)-f(0)}{\|x_k-0\|^2}=\frac{1}{k}\rightarrow 0.
$$
Hence $\bar x=0$ is not the strict local minimizer of order two for $f$ even though $f$ satisfies the second-order optimality condition of the first kind at $\bar x$. \\

Next, we focus on characterizations for strict local minimizers of order two in a Hilbert space. To this aim, we consider the following notion.\\

\noindent{\bf Definition 5.2} {\it
Let $f:H\rightarrow \mathbb{R}\cup\{+\infty\}$ be a proper lower semicontinuous function, $\beta\in (0, +\infty)$, $p\in \partial_p f(\bar x)$ and let $A$ be a nonempty set of $H$. We say that $f_-''(\bar x, p, h)\geq \beta$ holds uniformly with respect to $h\in A$ if for any $\varepsilon>0$ there exists $\delta>0$ such that
\begin{equation}\label{4.4a}
\frac{f(\bar x+th)-f(\bar x)-t\langle p, h\rangle}{\frac{1}{2}t^2}\geq \beta-\varepsilon
\end{equation}
holds for all $t\in (0, \delta)$ and $h\in A+\delta B_H$.}\\

\begin{pro}
Let $f:H\rightarrow \mathbb{R}\cup\{+\infty\}$ be a proper lower semicontinuous function, $p\in \partial_p f(\bar x)$  and let $A$ be a compact set of $H$. Then the following statements are equivalent:

{\rm (i)} there exists $\beta>0$ such that $f_-''(\bar x, p, h)\geq \beta$ holds uniformly with respect to $h\in A$;

{\rm (ii)} there exists $\beta>0$ such that
\begin{equation}\label{4.5a}
f_-''(\bar x, p, h)\geq \beta\ \ {\it for\ all} \ h\in A;
\end{equation}

{\rm (iii)} $f_-''(\bar x, p, h)>0$ holds for all $h\in A$.
\end{pro}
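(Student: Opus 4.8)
The plan is to prove the cyclic chain of implications $\mathrm{(i)}\Rightarrow\mathrm{(ii)}\Rightarrow\mathrm{(iii)}\Rightarrow\mathrm{(i)}$, where the only nontrivial step is the last one and it is there that compactness of $A$ is essential. The first implication is immediate: taking $\varepsilon=\beta/2$ in Definition 5.2 gives a $\delta>0$ so that $\Delta_2 f(\bar x,p,t,h)\geq\beta/2$ for all $t\in(0,\delta)$ and all $h\in A+\delta B_H$; in particular this holds for every fixed $h\in A$ with all small $t$, so passing to the liminf over $(t,h')\to(0^+,h)$ in \eqref{3.5} yields $f_-''(\bar x,p,h)\geq\beta/2$ for all $h\in A$. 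The implication $\mathrm{(ii)}\Rightarrow\mathrm{(iii)}$ is trivial since $\beta>0$.

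For $\mathrm{(iii)}\Rightarrow\mathrm{(i)}$, I would argue by contradiction. Suppose $f_-''(\bar x,p,h)>0$ for every $h\in A$ but that no $\beta>0$ works uniformly on $A$. First observe that $h\mapsto f_-''(\bar x,p,h)$ is lower semicontinuous on the compact set $A$ by Proposition 3.1(i), hence attains a minimum value $\beta_0:=\min_{h\in A}f_-''(\bar x,p,h)$, which is $>0$ by (iii). It then suffices to show that $f_-''(\bar x,p,h)\geq\beta_0$ holds uniformly with respect to $h\in A$ in the sense of Definition 5.2 (possibly after shrinking $\beta_0$ slightly, say to $\beta_0/2$, which is harmless). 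Assume this fails: there is $\varepsilon_0>0$ and sequences $t_n\downarrow 0$, $h_n\in A+\tfrac1n B_H$ with
$$
\Delta_2 f(\bar x,p,t_n,h_n)<\beta_0-\varepsilon_0\qquad\text{for all }n.
$$
Since $h_n$ lies within $\tfrac1n$ of the compact set $A$, we may pass to a subsequence along which $h_n\to h^*\in A$. But then $(t_n,h_n)\to(0^+,h^*)$, so by the definition of the liminf in \eqref{3.5},
$$
f_-''(\bar x,p,h^*)\leq\liminf_{n\to\infty}\Delta_2 f(\bar x,p,t_n,h_n)\leq\beta_0-\varepsilon_0<\beta_0\leq f_-''(\bar x,p,h^*),
$$
a contradiction. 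Hence the uniform inequality holds and (i) follows.

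The step I expect to require the most care is making the contradiction argument in $\mathrm{(iii)}\Rightarrow\mathrm{(i)}$ fully rigorous, specifically the bookkeeping of the two perturbation parameters: the negation of Definition 5.2 must be unpacked correctly (for a fixed $\varepsilon_0$, the violating pairs $(t,h)$ with $t<\delta$ and $h\in A+\delta B_H$ exist for every $\delta>0$, so one extracts them along $\delta=\delta_n\downarrow 0$, forcing both $t_n\downarrow 0$ and $\mathrm{dist}(h_n,A)\to 0$). The compactness of $A$ is exactly what converts "$h_n$ close to $A$'' into "$h_n$ subconverges to a point of $A$,'' and lower semicontinuity of $f_-''(\bar x,p,\cdot)$ from Proposition 3.1(i) is what lets the liminf of the difference quotients along the subsequence control the value at the limit point. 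Everything else is routine.
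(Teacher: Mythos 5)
Your proof is correct, but it is packaged differently from the paper's. The paper proves the two equivalences separately: (ii)$\Rightarrow$(i) is done \emph{directly} by an open-cover argument --- for each $h\in A$ the pointwise bound $f_-''(\bar x,p,h)\ge\beta$ is unpacked into a bound for the difference quotients on $(0,\delta_h)\times B(h,\delta_h)$, a finite subcover of $A$ is extracted, and $\delta:=\min_i\delta_{h_i}/2$ gives the uniform estimate on $A+\delta B_H$; then (iii)$\Rightarrow$(ii) follows because $f_-''(\bar x,p,\cdot)$ is lower semicontinuous (Proposition 3.1) and hence attains a positive minimum on the compact set $A$. You instead close a cyclic chain (i)$\Rightarrow$(ii)$\Rightarrow$(iii)$\Rightarrow$(i), absorbing both compactness uses into the single implication (iii)$\Rightarrow$(i): the lsc-plus-compactness minimum argument (identical to the paper's (iii)$\Rightarrow$(ii)) followed by a sequential-compactness contradiction in place of the finite subcover, where the negation of Definition 5.2 along $\delta=1/n$ produces $t_n\downarrow 0$ and $h_n$ with $\mathrm{dist}(h_n,A)\to 0$, a subsequence $h_{n_k}\to h^*\in A$, and the liminf definition of $f_-''(\bar x,p,h^*)$ yields the contradiction with $\beta_0=\min_A f_-''(\bar x,p,\cdot)$. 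The two mechanisms are of course equivalent faces of compactness in a metric space: your contradiction argument is a bit more economical (one step delivers uniformity straight from pointwise positivity), while the paper's covering proof is direct and constructive, exhibiting the uniform $\delta$ explicitly from the local radii, and it isolates the implication (ii)$\Rightarrow$(i), which is the one reused elsewhere (e.g.\ in Corollary 5.1). Your bookkeeping of the negation and the passage from ``$h_n$ close to $A$'' to ``$h_n$ subconverges in $A$'' is exactly right, and no step of your argument fails.
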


\begin{proof}
(i)$\Rightarrow$ (ii): The implication follows from \eqref{4.4a} and definition of $f_-''(\bar x, 0, h)$.

(ii)$\Rightarrow$ (i): Let $h\in A$ and $\varepsilon>0$. By \eqref{4.5a}, there exists $\delta_h>0$ such that
\begin{equation}\label{4.6a}
\frac{f(\bar x+th')-f(\bar x)-t\langle p, h'\rangle}{\frac{1}{2}t^2}\geq \beta-\varepsilon\ \ \forall t\in (0, \delta_h)\ {\rm and}\ \forall h'\in B(h, \delta_h).
\end{equation}
Noting that $A$ is compact, there exist $h_1,\cdots, h_n\in A$ such that
\begin{equation}\label{4.7a}
A\subset \bigcup_{i=1}^nB(h_i, \frac{\delta_{h_i}}{2}).
\end{equation}
Let $\delta:=\min\{\frac{\delta_{h_1}}{2},\cdots,\frac{\delta_{h_n}}{2}\}$ and take arbitrary $t\in (0, \delta)$, $h'\in A+\delta B_H$. By virtue of \eqref{4.7a}, there exists $j\in \{1, \cdots, n\}$ such that
$$
h'\in B(h_j, \frac{\delta_{h_j}}{2})+\delta B_H\subset B(h_j, \delta_{h_j}).
$$
This and \eqref{4.6a} imply that
$$
\frac{f(\bar x+th')-f(\bar x)-t\langle p, h'\rangle}{\frac{1}{2}t^2}\geq \beta-\varepsilon.
$$
Thus (ii) holds.

(iii)$\Rightarrow$ (ii): By Proposition 3.1, one has that $f_-''(\bar x, p, \cdot)$ is lower semicontinuous, and there exists $\hat h\in A$ such that
$$
\min_{h\in A}f_-''(\bar x, p, h)=f_-''(\bar x, p, \hat h)>0
$$
(thanks to the compactness of $A$). Then (ii) follows by choosing $\beta:=f_-''(\bar x, p, \hat h)>0$. Since  (ii) implies (iii) trivially, the proof is completed.
\end{proof}

The following theorem provides characterizations for strict local minimizer of order two in the Hilbert space setting.

\begin{them}
Let $f: H\rightarrow \mathbb{R}\cup \{+\infty\}$ be a proper lower semicontinuous function and $\bar x\in{\rm dom}f$. Then the following statements are equivalent:

{\rm (i)} $\bar x$ is a strict local minimizer of order two for $f$.

{\rm (ii)} $0\in \partial_pf(\bar x)$ and there exists $\beta>0$ such that $f_-''(\bar x, 0, h)\geq \beta$ holds uniformly with respect to $h\in S_H$.

{\rm (iii)} The following inequality holds:
\begin{equation*}
\liminf_{t\downarrow 0}\Big(\inf_{h\in S_H}\frac{2(f(\bar x+th)-f(\bar x))}{t^2}\Big)>0.
\end{equation*}
\end{them}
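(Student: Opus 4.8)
The plan is to prove the circular chain $\mathrm{(i)}\Rightarrow\mathrm{(iii)}\Rightarrow\mathrm{(ii)}\Rightarrow\mathrm{(i)}$. Two of these links are immediate from the definitions of a strict local minimizer of order two (Definition 5.1), of the proximal subdifferential, and of uniform validity of the second-order estimate (Definition 5.2); the only genuine content sits in $\mathrm{(iii)}\Rightarrow\mathrm{(ii)}$. Note that we deliberately do \emph{not} route the argument through Proposition 5.1, since $S_H$ is not compact in infinite dimensions and, as Example 5.1 shows, the pointwise condition $f_-''(\bar x,0,h)\ge\beta$ on $S_H$ is genuinely weaker than strict minimality of order two; the uniform formulation in (ii) is precisely what restores the equivalence.

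\textbf{The two easy links.} For $\mathrm{(i)}\Rightarrow\mathrm{(iii)}$: take $\beta,\delta>0$ as in \eqref{5.1}; for $h\in S_H$ and $t\in(0,\delta)$ the point $x=\bar x+th$ satisfies $\|x-\bar x\|=t<\delta$, so $\tfrac{2(f(\bar x+th)-f(\bar x))}{t^2}\ge\beta$, whence $\inf_{h\in S_H}\tfrac{2(f(\bar x+th)-f(\bar x))}{t^2}\ge\beta$ for all $t\in(0,\delta)$ and the $\liminf$ in (iii) is at least $\beta>0$. For $\mathrm{(ii)}\Rightarrow\mathrm{(i)}$: apply Definition 5.2 with $p=0$, $A=S_H$ and $\varepsilon=\beta/2$ to obtain $\delta>0$ with $\tfrac{2(f(\bar x+th)-f(\bar x))}{t^2}\ge\beta/2$ for all $t\in(0,\delta)$ and $h\in S_H+\delta B_H$, in particular for $h\in S_H$; given any $x\in B(\bar x,\delta)$, $x\neq\bar x$, write $x=\bar x+th$ with $t=\|x-\bar x\|\in(0,\delta)$ and $h=(x-\bar x)/\|x-\bar x\|\in S_H$ to get $f(x)\ge f(\bar x)+\tfrac{\beta}{4}\|x-\bar x\|^2$, which is \eqref{5.1} with constant $\beta/2$.

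\textbf{The main link $\mathrm{(iii)}\Rightarrow\mathrm{(ii)}$.} Let $L>0$ be the $\liminf$ in (iii) and fix $\beta\in(0,L)$. By the monotonicity definition of $\liminf$ there is $\delta_0>0$ with $f(\bar x+th)\ge f(\bar x)+\tfrac{\beta}{2}t^2$ for all $t\in(0,\delta_0)$ and $h\in S_H$. First I extract $0\in\partial_pf(\bar x)$: for $y\in B(\bar x,\delta_0)$, $y\neq\bar x$, the rescaling just used gives $f(y)\ge f(\bar x)+\tfrac{\beta}{2}\|y-\bar x\|^2\ge f(\bar x)-\tfrac{\sigma}{2}\|y-\bar x\|^2$ for any $\sigma>0$, which is exactly the defining inequality of the proximal subdifferential with $\zeta=0$, so $0\in\partial_pf(\bar x)$. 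It remains to check the uniform estimate \eqref{4.4a} (with $p=0$, $A=S_H$) for the same constant $\beta$. Given $\varepsilon\in(0,\beta)$, the crucial observation is the scaling identity $\bar x+th'=\bar x+(t\|h'\|)\cdot(h'/\|h'\|)$: if $\delta<1$ and $h'\in S_H+\delta B_H$ then $h'\neq0$, $\|h'\|\in[1-\delta,1+\delta]$, $h'/\|h'\|\in S_H$, and provided $t\|h'\|<\delta_0$ we obtain $f(\bar x+th')\ge f(\bar x)+\tfrac{\beta}{2}(t\|h'\|)^2\ge f(\bar x)+\tfrac{\beta}{2}(1-\delta)^2t^2$, i.e. $\tfrac{2(f(\bar x+th')-f(\bar x))}{t^2}\ge\beta(1-\delta)^2$. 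Now choose $\delta\le\delta_0/2$ (so that $t\|h'\|<2\delta\le\delta_0$) and $\delta$ small enough that $\beta(1-\delta)^2\ge\beta-\varepsilon$; then \eqref{4.4a} holds for all $t\in(0,\delta)$ and $h\in S_H+\delta B_H$, which is exactly the assertion that $f_-''(\bar x,0,h)\ge\beta$ holds uniformly with respect to $h\in S_H$. Together with $0\in\partial_pf(\bar x)$ this is (ii).

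\textbf{Main obstacle.} The only place requiring care is this last implication: Definition 5.2 asks for the difference-quotient bound not on $S_H$ alone but on the thickened set $S_H+\delta B_H$, with a single radius $\delta$ simultaneously bounding the range of $t$ \emph{and} the perturbation of directions, whereas hypothesis (iii) furnishes information only along unit directions. The positive-homogeneity rescaling above resolves this, but one must pick $\delta$ so that $t\|h'\|$ stays below $\delta_0$ and the distortion factor $(1-\delta)^2$ stays within $\varepsilon/\beta$ of $1$; once this simultaneous choice is made, everything else is routine.
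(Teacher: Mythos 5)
Your proof is correct and uses essentially the same ingredients as the paper's: the positive-homogeneity rescaling $x=\bar x+\|x-\bar x\|\cdot\frac{x-\bar x}{\|x-\bar x\|}$ together with a single choice of $\delta$ that simultaneously controls the range of $t$ and the thickening $S_H+\delta B_H$. The only difference is organizational: the paper proves (i)$\Leftrightarrow$(ii) and (i)$\Leftrightarrow$(iii) with (i) as the hub, whereas you close the cycle (i)$\Rightarrow$(iii)$\Rightarrow$(ii)$\Rightarrow$(i), your (iii)$\Rightarrow$(ii) step being in effect the paper's (iii)$\Rightarrow$(i) composed with its (i)$\Rightarrow$(ii).
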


\begin{proof} (i)$\Rightarrow$(ii): Since $\bar x$ is a strict local minimizer of order two for $f$, there exist $\beta, \delta>0$ such that
\begin{equation}\label{4a}
f(x)\geq f(\bar x)+\frac{\beta}{2}\|x-\bar x\|^2\ \ \forall x\in B(\bar x, \delta).
\end{equation}
This implies that $0\in\partial_pf(\bar x)$. Let $\varepsilon>0$ and take $\delta_1\in (0, \delta)$ such that $\delta_1<1$, $\delta_1(1+\delta)<\delta$ and $\beta(1-\delta_1)^2>\beta-\varepsilon$. Then for any $t\in(0, \delta_1)$ and $h\in S_H+\delta_1B_H$, one has $1-\delta_1\leq\|h\|\leq 1+\delta_1$ and it follow from \eqref{4a} that
\begin{eqnarray*}
\frac{f(\bar x+th)-f(\bar x)}{\frac{1}{2}t^2}\geq\beta\|h\|^2\geq\beta(1-\delta_1)^2>\beta-\varepsilon.
\end{eqnarray*}
Thus $f_-''(\bar x, 0, h)\geq\beta$ holds uniformly with respect to $h\in S_H$ and (ii) holds.

(ii)$\Rightarrow$(i): Let $\varepsilon\in (0, \beta)$. By Definition 5.2, there exists $\delta>0$ such that
\begin{equation}\label{2ab}
  \frac{f(\bar x+th)-f(\bar x)}{\frac{1}{2}t^2}\geq\beta-\varepsilon\ \ \forall t\in(0, \delta)\ {\rm and}\ \forall h\in S_H+\delta B_H.
\end{equation}
Then for any $x\in B(\bar x, \delta)\backslash\{\bar x\}$,  by \eqref{2ab}, one has
\begin{eqnarray*}
f(x)-f(\bar x)=f\big(\bar x+\|x-\bar x\|\cdot\frac{x-\bar x}{\|x-\bar x\|}\big)-f(\bar x)\geq\frac{\beta-\varepsilon}{2}\|x-\bar x\|^2.
\end{eqnarray*}
This implies that
$$
f(x)\geq f(\bar x)+\frac{\beta-\varepsilon}{2}\|x-\bar x\|^2\ \ \forall x\in B(\bar x, \delta).
$$
Hence $\bar x$ is a strict local minimizer of order two for $f$.

(i)$\Rightarrow$(iii): Let $\beta, \delta>0$ be such that \eqref{4a} hold. Then
$$
\inf_{h\in S_H}\frac{2(f(\bar x+th)-f(\bar x))}{t^2}\geq\beta \ \ \forall t\in (0, \delta)
$$
and consequently
$$
\liminf_{t\downarrow 0}\Big(\inf_{h\in S_H}\frac{2(f(\bar x+th)-f(\bar x))}{t^2}\Big)\geq\beta>0.
$$
This means that (iii) holds.

(iii)$\Rightarrow$(i):  Define
$$
\beta:=\liminf_{t\downarrow 0}\Big(\inf_{h\in S_H}\frac{2(f(\bar x+th)-f(\bar x))}{t^2}\Big)>0
$$
and let $\varepsilon\in (0, \beta)$. Then there exists $\delta>0$ such that
\begin{equation}\label{3a}
  \inf_{h\in S_H}\frac{2(f(\bar x+th)-f(\bar x))}{t^2}\geq\beta-\varepsilon\ \ \forall t\in (0, \delta).
\end{equation}
Let $x\in B(\bar x, \delta)\backslash\{\bar x\}$. By \eqref{3a}, one has
\begin{eqnarray*}
f(x)-f(\bar x)=f\big(\bar x+\|x-\bar x\|\cdot\frac{x-\bar x}{\|x-\bar x\|}\big)-f(\bar x)\geq\frac{\beta-\varepsilon}{2}\|x-\bar x\|^2.
\end{eqnarray*}
Hence $\bar x$ is a strict local minimizer of order two for $f$. The proof is complete.
\end{proof}

\noindent{\bf Remark 5.1.} We are now back to the Example 5.1. It is shown that $\bar x=0\in l^2$ is not the strict local minimizer of order two for $f$ appearing in Example 5.1. Further, if we take $t_k=\frac{1}{k}$ for each $k$, by computing, one has
$$
\inf_{h\in S_H}\frac{2(f(\bar x+t_kh)-f(\bar x))}{t_k^2}=\frac{2f(t_ke_k)}{t_k^2}=\frac{2}{k}.
$$
This means that
$$
\liminf_{t\downarrow 0}\Big(\inf_{h\in S_H}\frac{2(f(\bar x+th)-f(\bar x))}{t^2}\Big)\leq \lim_{k\rightarrow \infty}\Big(\inf_{h\in S_H}\frac{2(f(\bar x+t_kh)-f(\bar x))}{t_k^2}\Big)=0.
$$
Using Theorem 5.1, it follows that $\bar x=0\in l^2$ is not the strict local minimizer of order two for $f$.\\

Since the unit sphere of finite-dimensional space is compact, the following corollary is immediate from Proposition 5.1 and Theorem 5.1. This result also shows that Theorem C can be obtained from Theorem 5.1.

\begin{coro}
Let $H$ be a finite-dimensional space, $f:H\rightarrow \mathbb{R}\cup\{+\infty\}$ be a proper lower semicontinuous function and let $\bar x\in {\rm dom}f$. Then the following statements are equivalent:

{\rm (i)} $\bar x$ is a strict local minimizer of order two for $f$;

{\rm (ii)} $0\in \partial_pf(\bar x)$ and there exists $\beta>0$ such that $f_-''(\bar x, 0, h)\geq \beta$ holds uniformly with respect to $h\in S_H$;

{\rm (iii)} $0\in \partial_pf(\bar x)$ and there exists $\beta>0$ such that
$f_-''(\bar x, 0, h)\geq \beta$ holds for all $h\in S_H$;

{\rm (iv)} $0\in \partial_pf(\bar x)$ and $f_-''(\bar x, 0, h)>0$ holds for all $h\in S_H$.
\end{coro}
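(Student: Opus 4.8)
The plan is to deduce the corollary by combining Theorem 5.1 with Proposition 5.1, using the single extra fact that the unit sphere $S_H$ of a finite-dimensional space is compact. So first I would record that, since $\dim H<\infty$, the set $A:=S_H$ is a nonempty compact subset of $H$; this is exactly the hypothesis needed to invoke Proposition 5.1 (with the choice $p=0$).

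Next I would apply Proposition 5.1 with $A=S_H$ and $p=0$. Its three equivalent clauses become: (a) there exists $\beta>0$ with $f_-''(\bar x,0,h)\ge\beta$ uniformly with respect to $h\in S_H$; (b) there exists $\beta>0$ with $f_-''(\bar x,0,h)\ge\beta$ for all $h\in S_H$; (c) $f_-''(\bar x,0,h)>0$ for all $h\in S_H$. Adjoining the common first-order requirement $0\in\partial_pf(\bar x)$ to each of (a), (b), (c) reproduces verbatim the statements (ii), (iii), (iv) of the corollary, so Proposition 5.1 yields ${\rm (ii)}\Leftrightarrow{\rm (iii)}\Leftrightarrow{\rm (iv)}$ at once.

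It then remains to connect (i) to this chain. A finite-dimensional normed space is in particular a Hilbert space, so Theorem 5.1 applies directly; the condition appearing in Theorem 5.1(ii) is literally the condition in Corollary 5.1(ii), and hence the equivalence ${\rm (i)}\Leftrightarrow{\rm (ii)}$ furnished by Theorem 5.1 closes the loop, giving the equivalence of all four statements. I do not expect a genuine obstacle: the argument is a straightforward specialization, and the only point meriting an explicit line is the compactness of $S_H$ in finite dimension (so that Proposition 5.1 is applicable); the rest is just keeping track of which hypotheses are shared among (ii)–(iv). As the paragraph preceding the corollary notes, Theorem C is then recovered as the special case $\dim H<\infty$ of Theorem 5.1 via this corollary.
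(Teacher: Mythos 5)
Your proposal is correct and follows essentially the same route as the paper: the compactness of $S_H$ in finite dimension makes Proposition 5.1 (with $A=S_H$, $p=0$) give ${\rm (ii)}\Leftrightarrow{\rm (iii)}\Leftrightarrow{\rm (iv)}$, and Theorem 5.1 supplies ${\rm (i)}\Leftrightarrow{\rm (ii)}$, which is exactly how the paper treats the corollary as immediate.
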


The following corollary, immediate from Theorem 5.1 and (i)$\Rightarrow$(ii) in Proposition 5.1, shows that the second-order optimality condition of the first kind is necessary for strict local minimizers of order two in the Hilbert space setting.

\begin{coro}
Let $f: H\rightarrow \mathbb{R}\cup\{+\infty\}$ be a lower semicontinuous function
and $\bar x\in{\rm dom}f$. If $\bar x$ is a strict local
minimizer of order two for $f$, then $0\in
\partial_pf(\bar x)$ and $f$ satisfies the second-order optimality condition of the first kind at $\bar x$.
\end{coro}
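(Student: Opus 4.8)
The plan is to derive the corollary directly from Theorem 5.1 together with the easy implication (i)$\Rightarrow$(ii) of Proposition 5.1. First, assuming that $\bar x$ is a strict local minimizer of order two for $f$, I would invoke the implication (i)$\Rightarrow$(ii) of Theorem 5.1: it yields at once that the first-order condition $0\in\partial_pf(\bar x)$ holds and that there exists $\beta>0$ for which $f_-''(\bar x,0,h)\geq\beta$ holds \emph{uniformly} with respect to $h\in S_H$ in the sense of Definition 5.2.

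Second, I would pass from this uniform statement to the pointwise bound required by Definition 4.2(i). This is precisely the content of the implication (i)$\Rightarrow$(ii) of Proposition 5.1, whose verification uses only the defining inequality \eqref{4.4a} and the definition \eqref{3.5} of the second-order lower Dini-directional derivative; in particular it does not appeal to compactness of the index set, so it applies verbatim with $A=S_H$ even though $S_H$ is not compact when $H$ is infinite-dimensional. Explicitly, fixing $h\in S_H$ and $\varepsilon>0$ and choosing $\delta>0$ as in \eqref{4.4a}, one has $\Delta_2f(\bar x,0,t,h')\geq\beta-\varepsilon$ for all $t\in(0,\delta)$ and $h'\in B(h,\delta)$; letting $h'\rightarrow h$ and $t\downarrow 0$ gives $f_-''(\bar x,0,h)\geq\beta-\varepsilon$, and since $\varepsilon>0$ is arbitrary, $f_-''(\bar x,0,h)\geq\beta$. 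Thus $f_-''(\bar x,0,h)\geq\beta$ for every $h\in S_H$, which is exactly the second-order optimality condition of the first kind at $\bar x$.

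Since each step is either a direct citation of an earlier result or the elementary limit argument just sketched, there is no substantive obstacle. The only point requiring a word of care is that Proposition 5.1 is stated for compact $A$ whereas here $A=S_H$ is not compact; one must therefore note that the specific implication used, (i)$\Rightarrow$(ii), is compactness-free (alternatively, one may simply read off the uniform bound from \eqref{5.1} directly and then take the $\liminf$).
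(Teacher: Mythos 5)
Your proposal is correct and follows essentially the same route as the paper, which derives the corollary immediately from Theorem 5.1 and the implication (i)$\Rightarrow$(ii) of Proposition 5.1; your explicit remark that this implication is compactness-free (so it applies with $A=S_H$ in infinite dimensions) is a worthwhile clarification of a point the paper passes over silently.
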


The following theorem, as one main result of this paper, is obtained from Corollary 5.2 and Theorems 4.1-4.3.

\begin{them}
Let $f:H\rightarrow \mathbb{R}\cup\{+\infty\}$ be a proper lower
semicontinuous and paraconcave function and $\bar x\in{\rm dom}f$ with $0\in \partial_pf(\bar x)$. Suppose that $f$ is continuous at $\bar x$ and twice epi-differentiable at $\bar x$ for $0$, and ${\rm qri}(N_M({\rm gph}(\partial_p f), (\bar x, 0)))\not=\emptyset$. If $\bar x$ is a strict
local minimizer of order two of $f$, then  $f$ satisfies all three types of second-order optimality conditions at $\bar x$.
\end{them}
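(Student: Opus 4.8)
The plan is to chain together the results already established, using the strict local minimizer hypothesis only as the entry point. First I would invoke Corollary 5.2: since $\bar x$ is a strict local minimizer of order two for $f$, we obtain $0 \in \partial_p f(\bar x)$ (which is already assumed here, so this is consistent) and, more importantly, that $f$ satisfies the second-order optimality condition of the first kind at $\bar x$; that is, there exists $\beta > 0$ with $f_-''(\bar x, 0, h) \geq \beta$ for all $h \in S_H$. This is the crucial translation from the minimizer property to an optimality condition phrased in terms of the second-order lower Dini-directional derivative.

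Next I would check that the remaining standing hypotheses of Theorem 4.3 are met, so that the equivalence of all three second-order optimality conditions can be applied. The hypotheses ``$f$ proper lsc and paraconcave'', ``$\bar x \in {\rm dom} f$ with $0 \in \partial_p f(\bar x)$'', ``$f$ continuous at $\bar x$'', ``$f$ twice epi-differentiable at $\bar x$ for $0$'', and ``${\rm qri}(N_M({\rm gph}(\partial_p f), (\bar x, 0))) \neq \emptyset$'' are all present in the statement of Theorem 5.2. The one hypothesis of Theorem 4.3 not explicitly listed is ``${\rm Proj}_z N_M({\rm gph}(\partial_p f), (\bar x, 0)) = H$''; however, by Proposition 4.1 (equation \eqref{4-2a}), since $f$ is paraconcave, continuous at $\bar x$, and twice epi-differentiable at $\bar x$ for $0$, we get ${\rm dom}\,\partial f_-''(\bar x, 0, \cdot) = H$ and hence, tracing through the duality identification ${\rm dom}\,\partial_M^2 f(\bar x, 0) = -{\rm Proj}_z N_M({\rm gph}(\partial_p f), (\bar x, 0))$ together with \eqref{4.16}, one can argue that ${\rm Proj}_z N_M = H$ as well — or, alternatively, one can bypass the need for this hypothesis by using Theorem 4.1 directly once the first kind is known, since Theorem 4.1 already gives (first kind $\Rightarrow$ second kind) under exactly the hypotheses available, and the third kind follows from Theorem 4.2 once ${\rm sqri}({\rm dom}\,D_M^2 f(\bar x, 0)) = H$ (again from Proposition 4.1's conclusion ${\rm dom}\,D_M^2 f(\bar x, 0) = H$), making \eqref{4.22} trivial.

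Concretely, I would organize the conclusion as follows: from Corollary 5.2, $f$ satisfies the second-order optimality condition of the first kind. Then Theorem 4.1 gives that the first kind implies the second kind (its hypotheses — paraconcave, continuous at $\bar x$, twice epi-differentiable at $\bar x$ for $0$ — are exactly those assumed). For the third kind, apply Proposition 4.1 to conclude ${\rm dom}\,D_M^2 f(\bar x, 0) = H$, so ${\rm sqri}({\rm dom}\,D_M^2 f(\bar x, 0)) = H$ and \eqref{4.22} holds automatically; then Theorem 4.2 (whose only other hypothesis, ${\rm qri}(N_M) \neq \emptyset$, is assumed) yields that the second kind implies the third kind. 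Hence all three conditions hold at $\bar x$.

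The main obstacle I anticipate is the bookkeeping around the hypothesis ${\rm Proj}_z N_M({\rm gph}(\partial_p f), (\bar x, 0)) = H$: Theorem 4.3 as stated requires it, but Theorem 5.2 does not list it, so the proof must either derive it from the paraconcavity and twice epi-differentiability via Proposition 4.1 (the cleanest route, since \eqref{4-2a} already delivers ${\rm dom}\,\partial f_-''(\bar x, 0, \cdot) = H$, and Proposition 4.2 plus the duality dictionary then pin down the projection of $N_M$), or else route the argument through Theorems 4.1 and 4.2 individually rather than citing Theorem 4.3 wholesale. I would favor the latter, more transparent path, since it makes explicit that the missing hypothesis is not actually needed once one has the first-kind condition in hand from Corollary 5.2 — the implications (first $\Rightarrow$ second) and (second $\Rightarrow$ third) suffice, and one does not need the full cyclic equivalence.
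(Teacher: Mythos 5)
Your proposal is correct and follows essentially the same route as the paper, which derives Theorem 5.2 from Corollary 5.2 together with Theorems 4.1--4.2 (using Proposition 4.1 to make \eqref{4.22} trivial), i.e.\ the chain strict minimizer $\Rightarrow$ first kind $\Rightarrow$ second kind $\Rightarrow$ third kind; your choice to avoid citing Theorem 4.3 wholesale is the right call, since its hypothesis ${\rm Proj}_z N_M({\rm gph}(\partial_p f),(\bar x,0))=H$ is not assumed here. Only your parenthetical alternative --- deriving ${\rm Proj}_z N_M=H$ from Proposition 4.1/4.2 --- is unfounded (cf.\ Remark 4.3, which notes that ${\rm dom}\,\partial_M^2 f(\bar x,0)$ cannot be estimated this way), but you do not rely on it.
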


\section{Conclusions}
This paper is devoted to second-order optimality conditions as well as applications in the Hilbert space. Three types of second-order derivatives of nonsmooth functions are considered to discuss these second-order optimality conditions. Their equivalence for paraconcave functions are also proved. As applications, these optimality conditions are used to study the strict local minimizer of order two of nonsmooth functions and provide its necessary and/or sufficient conditions. The work in this paper generalizes and extends the study of second-order optimality conditions from finite-dimensional space to the Hilbert space setting.\\

\noindent{\bf Acknowledgement.} The authors are grateful to Professor Xi Yin Zheng for his helpful suggestions on Definition 5.2 and Theorem 5.1.


\begin{thebibliography}{99}

\bibitem{AF} J. P. Aubin, H. Frankowska,  Set-Valued Analysis, Birkh\"{a}user, Boston, 1990.


\bibitem{1} A. Auslender, Stability in mathematical programming with nondifferentiable data, SIAM J. Control Optim., 22(1984), pp. 239-254.

\bibitem{2} J. M. Borwein and Q. J. Zhu, Techniques of Variational Analysis, Springer, New York, 2005.


\bibitem{3} R. I. Bo\c{t}, E. R. Csetnek and G. Wanka, Regularity conditions via quasi-relative interior in convex programming, SIAM J. Optim., 19(2008), pp. 217-233.

\bibitem{4} R. W. Chaney, Second-order necessary conditions in constrained semismooth optimization, SIAM J. Control Optim., 25(1987), pp. 1072-1081.

\bibitem{5} R. W. Chaney, Second-order sufficient conditions in nonsmooth optimization, Math. Oper. Res., 13(1988), pp. 660-673.

\bibitem{6} R. W. Chaney, Second-order necessary conditions in semismooth optimization, Math. Program., 40(1988), pp. 95-109.

\bibitem{7} F. H. Clarke, Optimization and Nonsmooth Analysis, Wiley, New York, 1983.

\bibitem{8} F. H. Clarke, Yu. S. Ledyaev, R. J. Stern and P. R. Wolenski, Nonsmooth Analysis and Control Theory, Springer, 1998.

\bibitem{9} G. P. Crespi, I. Ginchev and M. Rocca, First order optimality conditions in set-valued optimization, Math. Methods Oper. Res., 63(1)(2006), pp. 87-106.

\bibitem{10} L. Cromme, Strong uniqueness: A far reaching criterion for the convergence of iterative procedures, Numer. Math., 29(1978), pp. 179-193.


\bibitem{EM} A. Eberhard and B. S. Mordukhovich, First-order and second-order optimality conditions for nonsmooth constrained problems via convolution smoothing, Optimization, 60(2011), pp. 253-275.

\bibitem{11} A. Eberhard and R. Wenczel, Some sufficient optimality conditions in nonsmooth analysis, SIAM J. Optim, 20(2009), pp. 251-296.

\bibitem{12} M. R. Hestenes, Optimization Theory. The Finite Dimensional Case, John Wiley Sons, New York, 1975.


\bibitem{13} L. R. Huang and K. F. Ng, Second-order necessary and sufficient conditions in nonsmooth optimization, Math. Program., 66(1994), pp. 379-402.

\bibitem{14} B. Jim\'enez, Strict efficiency in vector optimization, J. Math. Anal. Appl., 265(2)(2002), pp. 264-284.


\bibitem{15} R. E. Megginson, An Introduction to Banach Space Theory, Springer-Verlag, New York, 1998.

\bibitem{16} B. S. Mordukhovich, Variational Analysis and Generalized Differentiation I/II, Springer-verlag, Berlin, Heidelberg, 2006.

\bibitem{17} B. S. Mordukhovich and Y. Shao, Nonsmooth sequential analysis in Asplund spaces, Trans. Amer. Math. Soc., 348(1996), pp. 1235-1280.

\bibitem{18} R. Poliquin and R. T. Rockafellar, Prox-regular functions in variational analysis, Trans. Amer. Math. Soc., 348(1996), no. 5, pp. 1805-1838.

\bibitem{PR} R. Poliquin and R. T. Rockafellar, Tilt stability of local minimum, SIAM J. Optim., 8(1998), pp. 287-299.

\bibitem{19} R. T. Rockafellar and R. J-B. Wets, Variational Analysis, Springer-Verlag, New York, 1998.

\bibitem{20} B. Rodrigues, The Fenchel duality theorem in Fr\'echet spaces, Optimization., 21(1990), pp. 13-22.



\bibitem{22} M. Studniarski, Necessary and sufficient conditions for isolated local minima of nonsmooth functions, SIAM J. Control Optim., 24(1986), pp. 1044-1049.

\bibitem{23} D. Ward, A comparison of second-order epiderivatives: Calculus and optimality conditions, J. Math. Anal. Appl., 193 (1995), pp. 465-482.

\bibitem{24} R. Wenczel and A. Eberhard, Some convergence of parametrized sums of convex functions in non-reflexive spaces, Bull. Austral. Math. Soc., 60(1999), pp. 429-458.

\bibitem{YW1} F. Yang, Z. Wei and D. Wang, Subdifferential representation of homogeneous functions and extension of smoothness in Banach spaces, Acta. Math. Sin. (English Series), 26 (2010), no. 8, pp. 1535-1544.

\bibitem{27} X. Q. Yang, Second-order global optimality conditions for convex composite optimization, Math. Program., 81 (1998), pp. 327-347.




\bibitem{28} X. Y. Zheng and Z. Wei, Convergence of the associated sequence of normal cones of a Mosco convergent sequence of sets, SIAM J. Optim., 20(2012), pp. 758-771.




\end{thebibliography}
\end{document}